\newtheorem{theorem}{Theorem}[section]
\newtheorem{proposition}[theorem]{Proposition}
\newtheorem{lemma}[theorem]{Lemma}
\newtheorem{corollary}[theorem]{Corollary}
\numberwithin{equation}{section}
\crefname{theorem}{Theorem}{Theorems}
\crefname{proposition}{Proposition}{Propositions}
\crefname{lemma}{Lemma}{Lemmas}
\crefname{corollary}{Corollary}{Corollaries}
\crefname{example}{Example}{Examples}
\crefname{definition}{Definition}{Definitions}
\crefname{remark}{Remark}{Remarks}
\crefname{enumi}{}{}
\crefname{enumii}{}{}
\crefname{enumiii}{}{}
\crefname{equation}{}{}
\renewcommand{\Re}{\operatorname{Re}}
\renewcommand{\Im}{\operatorname{Im}}
\newcommand{\RR}{\mathbb{R}}
\newcommand{\CC}{\mathbb{C}}
\newcommand{\E}{\mathcal{E}}
\newcommand{\J}{J[\varrho]}
\newcommand{\Tor}{\mathrm{Tor}}
\newcommand{\wba}{\bar{w}}
\newcommand{\zba}{\bar{z}}
\newcommand{\Zba}{\overline{Z}}
\newcommand{\Lba}{\overline{L}}
\newcommand{\Nba}{\overline{N}}
\newcommand{\kba}{\bar{k}}
\newcommand{\ob}{\bar{1}}
\begin{document}
\title{The CR umbilical locus of a real ellipsoid in~$\mathbb{C}^2$}
\author{Duong Ngoc Son}
\address{Faculty of Fundamental Sciences, PHENIKAA University, Hanoi 12116, Vietnam}
\email{son.duongngoc@phenikaa-uni.edu.vn}
\begin{abstract} This paper concerns the CR umbilical locus of a real ellipsoid in  $\CC^2$, the set of points at which the ellipsoid can be osculated by a biholomorphic image of the sphere up to 6th order. Huang and Ji proved that this locus is non-empty. Ebenfelt and Zaitsev proved that, for ellipsoids that are sufficiently ``closed'' to the sphere, the locus actually contains a ``stable'' curve of umbilical points. Foo, Merker and Ta later provided an explicit curve that is contained in it. The main result in this paper exhibits that the umbilical locus is the union of the curves mentioned above and a non-trivial real variety which is defined by two homogeneous real sextic equations of four real variables. When there are suitable pairs of equal semi-axes, one of them factors allowing us to determine the locus explicitly.	 
\end{abstract}
\date{September 4, 2022.}
\thanks{\textit{Keywords and phrases:} Cartan invariant, CR manifold, umbilical points.}
\thanks{This work was begun while the author was at the University of Vienna, Austria. He gratefully acknowledges the support of the Austrian Science Fund (FWF): Projekt I4557-N}
\maketitle
\section{Introduction}
One of the most important biholomorphic invariants of nondegenerate real hypersurfaces in $\CC^2$ is the Cartan's umbilical tensor \cite{cartan1933geometrie}, denoted by  $Q$. It is a 6th order curvature tensor that characterizes the equivalence to the unit sphere. Precisely, if $Q$ vanishes on a neighborhood of a point $p$ on an analytic real hypersurface $M$, then $M$ is locally CR spherical (i.e. locally biholomorphically equivalent to the sphere) near $p$. If $Q$ vanishes at $p\in M$, then there is a biholomorphic image of the unit sphere with 6th order (or better) contact with $M$ at $p$ \cite{cartan1933geometrie,chern1974real}, and in this case, $p$ is called a CR \textit{umbilical point} of $M$. The umbilicality or non-umbilicality are biholomorphic invariant and related to other interesting geometric properties. For examples, Chern--Moser proved in their seminal paper \cite{chern1974real} that the stability group at a non-umbilical point consists of at most two elements. This fact leads to a construction of a vector field which is essentially invariant near each non-umbilical point. Chern--Moser also posed a question (which has not been fully resolved yet) about the existence of nowhere umbilical boundaries in $\CC^2$, see \cite{ebenfelt2018family} for a construction of such a boundary diffeomorphic to a three-torus.  Fefferman \cite{fefferman1976} constructed an example which exhibits Cartan chains spiralling in towards an umbilical point and asked (still open) if chains can only spiral in towards an umbilical point?

In the higher dimensional case, the notion of CR umbilical points is also well-defined (using the 4th order Chern--Moser tensor instead of the Cartan 6th order tensor). Webster \cite{webster2000holomorphic} proved, for the case $N\geq 3$, that a ``generic'' real ellipsoid in $\mathbb{C}^N$ has an empty CR umbilical locus (see also \cite{son2021semi} for a simpler proof). Hence, the higher dimensional version of the Chern--Moser question has an affirmative answer. In contrast, Huang--Ji \cite{huang2007every} proved that every real ellipsoid in $\CC^2$ admits umbilical points (i.e., real ellipsoids in $\CC^2$ do not answer Chern--Moser question). Further results on the existence or nonexistence of umbilical points on more general three-dimensional CR manifolds were obtained by Ebenfelt, Zaitsev, and the author in \cite{ebenfelt2017umbilical,ebenfelt2018family,ebenfelt2019new}. For instance, it has been proved that umbilical points must exist on complete circular boundaries in $\CC^2$ \cite{ebenfelt2017umbilical} and on various perturbations of the sphere (including real ellipsoids that are ``closed'' to the sphere) \cite{ebenfelt2019new}. In \cite{ebenfelt2018family}, Ebenfelt--Son--Zaitsev provided an explicit family of nowhere umbilical compact real hypersurfaces that are diffeomorphic to a three-torus, partly resolved the Chern--Moser question. For real ellipsoids, Foo--Merker--Ta \cite{foo2018parametric} improved Huang--Ji's result by providing an explicit parametric curve that is contained in the umbilical locus. However, in view of Monge's result \cite{monge1976} for two-dimensional ellipsoids in $\RR^3$, our understanding in CR case was not completely satisfactory: It was not known from the previous results if there are umbilical points other than those on the curve mentioned above.

The main aim of this paper is to determine the CR umbilical locus of an arbitrary real ellipsoid in $\mathbb{C}^2$. Recall that an ellipsoid $\mathcal{E} \subset \CC^2$ is a compact real hypersurface given by the equation
\begin{align*}
A x^2  + B y^2 +  C u^2 +  D v^2 = 1, \quad A, B, C, D \in \mathbb{R}_{>0}.
\end{align*}
Here, $(z,w) = (x + iy,u+iv)$ is the complex coordinate system of $\mathbb{C}^2$ which gives the identification $\CC^2 \cong \RR^4$. In this coordinate system, the last equation can be written as
\[
\alpha |z|^2 + \beta |w|^2 + \Re \left( a z^2 + b w^2\right) - 1 = 0,
\]
with $\alpha = (A+B) /2$, $\beta = (C+D)/2$, $a = (A-B)/2$, and $b = (C-D)/2$. By the biholomorphic transformation $(\tilde{z}, \tilde{w}) = (\sqrt{\alpha} z, \sqrt{\beta}w)$, we can assume that $\alpha = \beta = 1$. We then obtain a two-parameter family which can ``represent'' all biholomorphic equivalent classes of real ellipsoids. 
The main result of this paper is a description of their CR umbilical loci. To describe it, we write $Z = (z,w)$ for the coordinates of $\mathbb{C}^2$, $\rho_{ZZ}$ for the ``holomorphic Hessian'' of $\rho$, and $L = \rho_w \partial_z - \rho_z \partial_w$ (which restricts to a frame for $T^{1,0} \mathcal{E}$). Thus, for $\rho$ given by \eqref{e:elip1}, we have
\[
	\rho_{ZZ}(L,L) 
	=
	a\rho_w^2 +b \rho_z^2.
\]
For the ellipsoid $\mathcal{E}$, we also put $N = \rho_{\bar{z}} \partial_z + \rho_{\bar{w}}\partial _w$ (cf. Eq. \eqref{e:211}). From this, $\rho_{ZZ}(N,N)$ and $\rho_{ZZ}(N,L)$ can be defined similarly. Moreover, let $J[\rho]$ denote the Levi--Fefferman determinant, see \eqref{e:lf}. We are now ready to state our main result.
\begin{theorem}\label{thm:1}
Let $0\leqslant b \leqslant a <1$, $a\ne 0$, and let $\mathcal{E}\subset \mathbb{C}^2$ be the real ellipsoid defined by $\varrho  = 0$ where
\begin{equation}\label{e:elip1}
\varrho  := |z|^2 + |w|^2 + \Re(az^2 + bw^2) -1.
\end{equation}
Then the  CR umbilical locus of $\mathcal{E}$ is given by the union of the curves $\gamma_{1}$ and $\gamma_{2}$ (corresponding to the ``$+$'' and ``$-$'' signs in \eqref{e:f2} below) determined by
\begin{align}\label{e:f1}
z 
& =
\sqrt{\frac{a}{a+b}} \left(\frac{\sqrt{1-b}\, \cos(t)}{\sqrt{1+a}} + \frac{i\sqrt{1+b}\, \sin(t)}{\sqrt{1-a}}\right),\\ \label{e:f2}
w
& = \pm
\sqrt{\frac{b}{a+b}} \left(\frac{\sqrt{1-a}\, \sin(t)}{\sqrt{1+b}} - \frac{i\sqrt{1+a}\, \cos(t)}{\sqrt{1-b}}\right), \quad t\in [0, 2\pi),
\end{align}
and a non-trivial real variety $\mathcal{V} \subset \mathcal{E}$ given by the equation
\begin{align}\label{e:u2a}
0= -\frac{1}{2} \frac{ab\,\overline{\varrho _{ZZ}(L,L)}}{\J^3} - 2\frac{\overline{\varrho _{ZZ}(N,N)}}{\J^3} 
+ \frac{|\varrho _{ZZ}(L,L)|^2}{\J^4}   \notag  \\ 
- 4\frac{|\varrho _{ZZ}(N,L)|^2}{\J^4}
-\frac{5}{2}\frac{\overline{\varrho _{ZZ}(L,L)}\left(\varrho _{ZZ}(N,L)\right)^2}{\J^5},
\end{align}
which is the common intersection of $\mathcal{E}$ and two conical sextic real varieties with apexes at the origin.

Moreover,
\begin{enumerate}[(i)]
\item if $b=0$, then $\mathcal{V}$ is the union of the curves parametrized by
\begin{align}\label{c1}
z = \pm \sqrt{\frac{s_0}{(1+a)(1+a+s_0)}},\ 
w = \sqrt{\frac{1+a}{1+a+s_0}} \left(\cos(t) + i \sin (t)\right),
\end{align}
where $s_0$ is the unique positive root to the cubic
\begin{equation}
4s^3 + 8 (1+a) s^2 + (4+6a + 5a^2) s - 2a =0;
\end{equation}
\item if $b=a$, then the CR umbilical locus of $\mathcal{E}$ is the union of the curves having the following parameterizations
\begin{align}\label{e:16}
z 
& =
\frac{1}{\sqrt{1-a +\tau^2 (1+a)}}\left( \frac{\sqrt{1-a}\, \cos (t)}{\sqrt{1+a}}+ \frac{i\,\tau\,\sqrt{1+a}\, \sin (t)}{\sqrt{1-a}}\right), \\ \label{e:17}
w 
& = 
\frac{1}{\sqrt{1-a +\tau^2 (1+a)}}\left( \frac{\sqrt{1-a}\, \sin (t)}{\sqrt{1+a}} - \frac{i\,\tau\,\sqrt{1+a}\, \cos (t)}{\sqrt{1-a}}\right),
\end{align}
with $\tau \in \{-1, 1, -\sqrt{s_0}, \sqrt{s_0}\}$ and $s_0$ is the unique positive root of the cubic
\begin{equation}
(a^2+2a+4) s^3 + (4-a (19 a+34)) s^2 + (a (19 a-34)-4) s -a^2+ 2 a-4 =0.
\end{equation}
In this case, $\mathcal{V}$ consists of the curves corresponding to $\tau = \pm \sqrt{s_0}$.
\end{enumerate}
\end{theorem}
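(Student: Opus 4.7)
The plan is to compute Cartan's umbilical tensor $Q$ in closed form on $\mathcal{E}$, express the umbilicality condition $Q=0$ as a pair of real sextic polynomial equations in $(x,y,u,v)$, and then analyze the zero set algebraically by factoring out the known Foo--Merker--Ta curves.

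First, I would derive a formula for $Q$ in terms of the four scalar quantities $\varrho_{ZZ}(L,L)$, $\varrho_{ZZ}(N,L)$, $\varrho_{ZZ}(N,N)$, and $J[\varrho]$. For a general real hypersurface in $\CC^2$ this is an involved calculation, but for the ellipsoid it simplifies drastically: $\varrho_{Z\bar Z}$ is the identity and $\varrho_{ZZ} = \mathrm{diag}(a,b)$ is constant, so every iterated third- or higher-order holomorphic derivative of $\varrho$ vanishes. As a consequence the formula for $Q$ collapses to a finite combination of products of the four scalars above divided by powers of $J[\varrho]$, exactly as in \eqref{e:u2a}. After clearing denominators by multiplying through by $J[\varrho]^{5}$, each resulting term has total real degree six in $(x,y,u,v)$, which produces two homogeneous real sextic polynomials (the real and imaginary parts of the complex expression in \eqref{e:u2a}) and explains why their zero set consists of cones with apex at the origin.

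Second, since Foo--Merker--Ta \cite{foo2018parametric} already showed that $\gamma_1\cup\gamma_2$ lies in the umbilical locus, the ideal defining $\gamma_1\cup\gamma_2$ on $\mathcal{E}$ must divide the sextic system; the quotient defines the residual variety $\mathcal{V}$. The main technical obstacle of the proof is to carry out this factorization correctly and, more importantly, to prove that the union $\gamma_1\cup\gamma_2\cup\mathcal{V}$ accounts for \emph{all} common zeroes on $\mathcal{E}$. A natural way to handle this is through a resultant or Gr\"obner-basis computation on the bihomogeneous ideal, combined with a dimension count exploiting the conical structure.

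Finally, the two degenerate cases follow by exploiting extra symmetries. When $b=0$, $\mathcal{E}$ is invariant under the circle action $w\mapsto e^{i\theta}w$, so $\mathcal{V}$ is a union of orbits; after using this symmetry and the homogeneity of the sextics, \eqref{e:u2a} reduces to a single cubic equation in a real radial parameter, which is precisely the cubic stated in (i), and its unique positive root $s_0$ then yields the parametrization \eqref{c1}. When $b=a$, the ellipsoid acquires the additional $O(2,\RR)$-symmetry rotating $(z,w)$ jointly, which forces the umbilical locus to be a union of one-parameter orbits of the form \eqref{e:16}--\eqref{e:17}. Substituting into the sextic system produces a polynomial in $\tau^{2}$ that factors as $(\tau^{2}-1)\cdot p(\tau^{2})$ for some cubic $p$, with the linear factor corresponding to $\gamma_1\cup\gamma_2$ and the roots of $p$ yielding the remaining components of $\mathcal{V}$.
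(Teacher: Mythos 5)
There is a genuine structural gap. The heart of the paper's argument is not that the umbilicality condition \emph{is} the sextic system \eqref{e:u2a}, but that for a quadratic perturbation the Cartan component factors as a product, $Q_{11}=\varrho_{ZZ}(L,L)\cdot\mathcal{P}[\varrho]$, where $\mathcal{P}[\varrho]$ is precisely the bracket in \eqref{e:u2a} (Theorem~\ref{cartan} and Corollary~\ref{cor:49}). The umbilical locus is then the union of the zero sets of the two factors: the quadric system $\varrho_{ZZ}(L,L)=0$ gives $\gamma_1\cup\gamma_2$, and $\mathcal{P}[\varrho]=0$ gives $\mathcal{V}$; no ideal quotient, resultant, or Gr\"obner-basis step is needed, and completeness of the list is automatic. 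Your proposal instead asserts that $Q$ ``collapses \ldots exactly as in \eqref{e:u2a}'' and then proposes to recover $\mathcal{V}$ by dividing the ideal of $\gamma_1\cup\gamma_2$ out of that sextic system. This is wrong on two counts: first, \eqref{e:u2a} is only one factor of $Q_{11}$, so taking it as the full umbilicality condition would omit $\gamma_1\cup\gamma_2$ altogether; second, $\gamma_1\cup\gamma_2$ is in general \emph{not} contained in the zero set of \eqref{e:u2a}, so there is nothing to factor out. A concrete check: in case (ii) ($b=a$), $\gamma_{1,2}$ correspond to $\tau=\pm1$, and substituting $s=\tau^2=1$ into the cubic $(a^2+2a+4)s^3+(4-a(19a+34))s^2+(a(19a-34)-4)s-a^2+2a-4$ gives $-64a\neq0$; hence your claimed factorization of the restricted sextic as $(\tau^2-1)\,p(\tau^2)$ fails, and the curves $\tau=\pm1$ enter the locus only through the other factor $\varrho_{ZZ}(L,L)=0$.

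Beyond this, the proposal leaves the genuinely hard steps unexecuted. The closed-form expression for $Q_{11}$ does not follow merely from $\varrho_{ZZ}$ being constant: one must compute $R_{,1}{}^{\bar 1}$, $A_{1}{}^{\bar 1}{}_{,0}$ and $A_{1}{}^{\bar 1}{}_{,\bar 1}{}^{\bar 1}$ via the Cheng--Lee formula \eqref{e:chenglee} and the Li--Luk expressions for curvature, torsion and Christoffel symbols, which is where the specific coefficients $-\tfrac12,-2,1,-4,-\tfrac52$ in \eqref{e:u2a} come from; asserting a ``collapse'' does not produce them, and the decomposition of the theorem depends on getting the product structure right. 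In the generic case you also do not prove that $\mathcal{V}$ is non-empty (the paper does this by restricting to $\Re(i\varrho_z^2)=\Re(i\varrho_w^2)=0$, where the remaining equation becomes a homogeneous cubic in two real variables with a real root, so the cone is not reduced to the origin and must meet $\mathcal{E}$). Finally, in cases (i) and (ii) the symmetry reduction alone does not yield a single cubic: one must factor $\Im\mathcal{P}[\varrho]$ and rule out the spurious branches (e.g.\ for $b=0$ the cases $\Re\varrho_z=0$ and $(5a^2-4)|w|^4-4(|\varrho_z|^4+2|w|^2|\varrho_z|^2)=0$ must be shown to give no solutions), a case analysis your sketch omits.
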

The proof of this theorem will be given in Section 4, based on Corollary~3.10.

As briefly mentioned earlier, the curves given by \eqref{e:f1} and \eqref{e:f2}, constituting a proper subset of the CR umbilical locus, were found earlier in \cite{foo2018parametric}. These curves turn out to be the solutions to a system of two homogeneous quadratic equations in four real variables. In the case $b=0$ and $a\ne 0$, the ellipsoid is a real hypersurface of revolution and these two curves becomes the one given by
\[
z = \frac{1}{\sqrt{1+a}} \cos (t) + \frac{i}{\sqrt{1-a}} \sin(t), \quad w = 0.
\]
The CR umbilicality of points along this curve follows directly from the observation that each of the rotations $(z,w) \mapsto (z, e^{it} w), t\in \RR,$ fixes every point along this curve, in view of the Chern--Moser normalization at a non-umbilical point \cite{chern1974real}, see also \cite[Section 5.6]{ebenfelt2019new}. Two newly discovered curves in \eqref{c1} (in the case $b=0$), \eqref{e:16} and \eqref{e:17} (for $\tau = \pm \sqrt{s_0}$ in the case $b=a$) are the solutions to two homogeneous sextic equations, which we are able to solve explicitly when $b= 0$ or $b=a$. In the general case, we are unable to obtain explicit solutions. But in view of the Abel--Ruffini impossibility theorem, it could be the case that those multivariate sextic equations, with generic values of $a$ and $b$, cannot be solved by radicals.

\section{Preliminaries}
\subsection{Pseudohermitian geometry}
Let $M\subset \CC^2$ be a real hypersurface. The complex structure on $\CC^2$ induces a CR structure on $M$ in a natural way. Namely, if we define $T^{1,0} M := \CC TM \cap T^{1,0} \CC^2$, then $T^{1,0}M$ is a CR structure bundle on $M$ of complex dimension one. If we define $H: = \Re T^{1,0}M$, then $H$ is a real two-dimensional sub-bundle of the real tangent bundle $TM$. If we suppose that $H$ is nondegenerate, in the sense that for arbitrary vector fields $X$ and $Y$ that locally span $H$, the Lie bracket $[X,Y]$ is everywhere transverse to $H$, then $H$ becomes a contact structure on $M$. If $\theta$ is a real 1-form such that $\ker \theta = H$, then the nondegeneracy of $H$ is equivalent to $\theta \wedge d\theta \ne 0$ everywhere and $\theta$ is called a contact form. In this case, the characteristic (or Reeb) field of $\theta$ is the unique vector field $T$ such that $T \rfloor d\theta = 0$ and $\theta(T) = 1$. The contact form $\theta$ is also called a pseudohermitian structure \cite{webster1978pseudo} and $(M,\theta)$ is called a pseudohermitian manifold (with the underlying CR structure understood). If $Z_1$ is any complex vector field locally spanning $T^{1,0}M$, the admissible 1-form dual to $Z_1$ is the unique 1-form $\theta^1$ such that the coframe $\{\theta_1,\theta^{\ob},\theta\}$ is dual to the frame $\{Z_1, Z_{\ob},T\}$ and
\begin{equation}
d\theta = i h_{1\ob} \theta^1 \wedge \theta^{\ob}
\end{equation}
for some real function $h_{1\ob}$.  Here $\theta^{\ob} := \overline{\theta^1}$ and $Z_{\ob} := \overline{Z_1}$ are the complex conjugations. We shall assume that $h_{1\ob} > 0$ and say that $M$ is strictly pseudoconvex. The Levi form is denoted by 
\begin{equation}
\langle U, \overline{V} \rangle 
=
h_{1\ob} U^1 V^{\ob} \quad
\text{for}\ U = U^1 Z_1\ \text{and}\ \overline{V} = V^{\ob} Z_{\ob}.
\end{equation}
The (complexified) Tanaka--Webster connection of $(M,T^{1,0}M,\theta)$ is the connection $\nabla$ on $\CC TM$ given in terms of a local frame $Z_1$ by
\begin{equation}
\nabla Z_1 = \omega_1{}^{1} \otimes Z_1,
\quad
\nabla Z_{\ob} = \omega_{\ob}{}^{\ob} \otimes Z_{\ob}, 
\quad 
\nabla T = 0,
\end{equation}
where the connection form $\omega_{1}{}^{1}$ is the complex 1-form uniquely determined by
\begin{equation}
d\theta^1 = \theta^1 \wedge \omega_{1}{}^{1} + A_{\ob}{}^1 \theta \wedge \theta^{\ob},
\quad
\omega_{1}{}^{1} + \omega_{\ob}{}^{\ob} = d\log h_{1\ob}.
\end{equation}
The function $A_{\ob}{}^1$ is the coefficient of the Webster torsion $A$:
\[
A Z_{\ob} := \mathrm{Tor}(T, Z_{\ob}) = A_{\ob}^1 Z_1.
\]
The structure equation for the Tanaka--Webster connection is \cite{webster1978pseudo}
\begin{equation}
d\omega_{1}{}^{1} = R h_{1\ob} \theta^{1} \wedge \theta^{\ob} + A_{1}{}^{\ob}{}_{,\ob} \theta^1 \wedge \theta - A_{\ob}{}^{1}{}_{,1}\theta^{\ob} \wedge \theta,
\end{equation}
where $R$ is the Webster scalar curvature, which is real-valued.

The Cartan tensor is a relative invariant of the CR structure on $M$ whose vanishing is necessary and sufficient for $M$ to be locally CR spherical \cite{cartan1933geometrie}. If the pseudohermtian structure fixed, the Cartan tensor can be interpreted as an endomorphism of $H$, written locally as \cite{cheng1990burns}
\begin{equation}\label{e:26}
Q = i Q_{1}{}^{\ob} \theta^1 \otimes Z_{\ob} - i Q_{\ob}{}^{1}\theta^{\ob} \otimes Z_1.
\end{equation}
Then $Q$ is a pseudohermitian invariant which is CR-covariant in the sense that if $\hat{\theta} = e^{u}\theta$ is another pseudohermitian structure, then the corresponding Cartan tensor $\hat{Q}$ with respect to $\hat{\theta}$ satisfies $\hat{Q} = e^{-2u}Q$ \cite{cheng1990burns}. The Cartan tensor can be computed from the scalar curvature, the torsion, and their covariant derivatives via the following well-known formula \cite[Lemma 2.2]{cheng1990burns}.
\begin{equation}\label{e:chenglee}
Q_{1}{}^{\ob}
=
\frac{1}{6} R_{,1}{}^{\ob}
+ \frac{i}{2}R A_{1}{}^{\ob}
- A_{1}{}^{\ob}{}_{,0}
- \frac{2i}{3}
A_{1}{}^{\ob}{}_{,\ob}{}^{\ob}.
\end{equation}
Here, the indices preceded by a comma indicate covariant derivatives and the $0$-index indicates the covariant derivative along the Reeb direction.
\subsection{Curvature and torsion of a real hypersurfaces in local coordinates}
The main purpose of this section is to collect some useful formulas for the Tanaka--Webster connection form as well as the scalar curvature and torsion. In general, these formulas have been computed by Li--Luk \cite{li--luk}, cf. \cite{webster1978pseudo}.

Let $M\subset \mathbb{C}^{2}$ be a nondegenerate real hypersurface defined by $\varrho = 0$ with $d\varrho \ne 0$ along $M$. Let $\theta: = \iota^{\ast}(i \bar{\partial} \varrho)$. Then $\theta$ is a pseudohermitian structure on $M$. Let
\begin{equation}
Z_1 = L : = \varrho _{w} \partial_z - \varrho _z \partial_w,
\end{equation}
be a basis for $T^{1,0}(M)$ and let $\theta^1$ be a dual admissible coframe, then
\begin{equation}
h_{1\ob}
=
\J,
\end{equation}
where $\J$ is the the Levi--Fefferman determinant (a.k.a bordered complex Hessian) of $\varrho $, namely,
\begin{equation}\label{e:lf}
\J 
=
-\det
\begin{pmatrix}
\varrho  & \varrho _{\zba} & \varrho _{\wba} \\
\varrho _{z} & \varrho _{z\zba} & \varrho _{z\wba}\\
\varrho _{w} & \varrho _{w\zba} & \varrho _{w\wba}
\end{pmatrix}.
\end{equation}
For simplicity, we assume that $\J > 0$ (that $\J \ne 0$ characterizes the nondegeneracy of $M$). For a basis of the ``normal'' bundle $N^{1,0}(M)$ in $T^{1,0} \mathbb{C}^2$, we use 
\begin{equation}\label{e:211}
N: = \J \xi
\end{equation}
where $\xi$ is the unique $(1,0)$-vector field determined by 
\[
\partial\varrho (\xi) = 1, \quad \bar{\partial}\partial \varrho  \rfloor \xi = 0.
\]
Explicit formulas for the connection form $\omega_{1}{}^{1}$ and the torsion form $\tau^1$ (also for general dimensional case) were given in \cite{li--luk}. We describe them as follows. First we write
\begin{equation}
\omega_{1}{}^{1} = \Gamma_{11}^1 \theta + \Gamma_{\ob 1}^1 \theta^{\ob} + \Gamma_{01}^1 \theta.
\end{equation}
Then these Christoffel symbols are given by 
\begin{proposition}\label{prop:2.1} Suppose $M$ is a real hypersurface in $\mathbb{C}^2$ defined by $\varrho  = 0$ with $\J >0$. Let $\theta = \iota^{\ast}(i\bar{\partial}\varrho )$. Then the Christoffel symbols of the Tanaka--Webster connection in the frame $Z_1 = L$ are
\begin{align}
\Gamma_{11}^{1} &= L \log \J,\\
\Gamma_{01}^1 & =
\frac{i}{\J} \left(N \log \J - 2\det \varrho _{Z\Zba}\right),
\\
\Gamma_{\ob 1}^1 &= 0,
\end{align}
and their conjugates, i.e.,  $\Gamma_{0\ob}^{\ob} = \overline{\Gamma_{01}^1}$ and so on. Moreover,
\begin{equation}\label{e:tors}
	iA_1{}^{\ob} Z_{\ob}
	=
	Z_1(\xi^{\ob}) \partial_{\zba} + Z_{1}(\xi^{\bar{2}}) \partial_{\wba}. \qedhere
\end{equation}
\end{proposition}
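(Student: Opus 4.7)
The plan is to verify each formula via the structure equations of the Tanaka--Webster connection. Recall that $\omega_1{}^1$ and $A_{\ob}{}^1$ are uniquely determined by
\[
d\theta^1 = \theta^1 \wedge \omega_1{}^1 + A_{\ob}{}^1 \theta \wedge \theta^{\ob}, \qquad \omega_1{}^1 + \omega_{\ob}{}^{\ob} = d\log h_{1\ob},
\]
so writing $\omega_1{}^1 = \Gamma_{11}^1\theta^1 + \Gamma_{\ob 1}^1 \theta^{\ob} + \Gamma_{01}^1 \theta$, the three Christoffel symbols and $A_{\ob}{}^1$ can be read off from the three components of $d\theta^1$ together with the metric compatibility.

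I first handle the Levi coefficient and the CR-direction Christoffels. Since $d\theta = i\partial\bar\partial\varrho$, evaluating on $(L, \bar L)$ and simplifying via Cramer's rule applied to the ambient relation $d\varrho|_M = 0$ identifies $d\theta(L, \bar L)$ with $i\J$, so $h_{1\ob} = \J$. A direct expansion of the ambient bracket $[L, \bar L]$ shows that its $L$- and $\bar L$-components vanish after projection onto $\{L, \bar L, T\}$, which combined with the torsion purity encoded in the first structure equation forces $\Gamma_{\ob 1}^1 = 0$. The metric compatibility $\omega_1{}^1(L) + \omega_{\ob}{}^{\ob}(L) = L \log \J$ then immediately yields $\Gamma_{11}^1 = L \log \J$.

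For $\Gamma_{01}^1$, I first identify the Reeb vector field as $T = i(\xi - \bar\xi) = (i/\J)(N - \Nba)$, which follows from $\partial\varrho(\xi) = 1$, the reality of $T$, and $T \rfloor d\theta = 0$ (the latter being automatic from the tangential vanishing of $\bar\partial\partial\varrho \rfloor \xi$). Extracting the $\theta\wedge\theta^1$-component of $d\theta^1$ and applying $T$ to the coefficient of $\theta^1$ (a rational function in the first and second derivatives of $\varrho$ through $\J$), the result simplifies to $(i/\J)\bigl(N\log \J - 2\det \varrho_{Z\Zba}\bigr)$, with the determinant term emerging from a cancellation involving $N\log \J$ and the defining equations of $\xi$.

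Finally, for the torsion, since $\nabla T = 0$ and $\nabla_T Z_1 \in \operatorname{span}(Z_1)$, the identity $\Tor(T, Z_1) = A_1{}^{\ob} Z_{\ob}$ equates $-A_1{}^{\ob} Z_{\ob}$ with the ambient $(0,1)$-component of $[T, Z_1]$. Using $T = i(\xi - \bar\xi)$, only $-i[\bar\xi, Z_1]$ contributes a nontrivial $(0,1)$-part, which expands to $iZ_1(\xi^{\ob})\partial_{\zba} + iZ_1(\xi^{\bar{2}})\partial_{\wba}$; multiplying by $i$ yields the claimed formula. The main technical obstacle is the extraction of $\Gamma_{01}^1$: since $\theta^1$ depends rationally on derivatives of $\varrho$ through $\J$, tracking $T$'s action requires careful use of the defining conditions for $\xi$, and the clean emergence of $2\det \varrho_{Z\Zba}$ is where the content lies; the remaining steps are routine bookkeeping.
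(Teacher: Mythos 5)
Your proposal is correct and follows essentially the same route as the paper: identify the Reeb field as $i(\xi-\bar\xi)$ using that $[L,\bar L]$ has no $H$-component, obtain $\Gamma_{\ob 1}^1=0$ and $\Gamma_{11}^1=L\log\J$ from the connection's normalization (your structure-equation reading is just the dual formulation of the paper's Tanaka torsion-purity plus Levi-form parallelism), and extract $\Gamma_{01}^1$ and $A_1{}^{\ob}$ from the $(1,0)$- and $(0,1)$-parts of $[T,Z_1]$. Like the paper, you leave the final simplification producing $N\log\J-2\det\varrho_{Z\Zba}$ as an asserted computation, so the level of detail matches the original proof.
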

These formulas are essentially in \cite[Equation (2.20)]{li--luk} after a change of the local frame. In the case $n=1$, the proof is simpler and we provide it here for completeness.
\begin{proof}
First, we observe that the Reeb vector field is
\begin{equation}
T = i(\xi - \bar{\xi}) = \frac{i}{\J}[Z_1,Z_{\ob}].
\end{equation}
Therefore
\begin{equation}
\Tor(Z_1, Z_{\ob}):= \nabla_{Z_1} Z_{\ob} - \nabla_{Z_{\ob}} Z_1 - [Z_1, Z_{\ob}]
=
\Gamma_{1\ob}^{\ob} Z_{\ob} - \Gamma_{\ob 1}^{1} Z_{1} - i \J\, T.
\end{equation}
On the other hand, from Tanaka \cite{tanaka1975differential}, the torsion must satisfy
\begin{equation}
\Tor(Z_1,Z_{\ob})
=
i \langle Z_1, Z_{\ob}\rangle T.
\end{equation}
We immediately find that
\begin{equation}
\Gamma_{1\ob}^{\ob}
=
\Gamma_{\ob 1}^{1}
= 0.
\end{equation}
On the other hand, since the Levi-form is parallel, we have that
\begin{equation}
Z_1 \J
= 
Z_1 \cdot \langle Z_1, Z_{\ob} \rangle
=
\langle \nabla_{Z_1} Z_{1} , Z_{\ob} \rangle 
+
\langle Z_1 , \nabla_{Z_1} Z_{\ob} \rangle 
=
\J \Gamma_{1 1}^{1}.
\end{equation}
This proves that
\begin{equation}
\Gamma_{1 1}^1 = Z_1 \log \J.
\end{equation}
Finally, using
\begin{equation*}
A_1{}^{\ob} Z_{\ob}
=
\Tor(T,Z_{1})
=
\nabla_T Z_1 - \nabla_{Z_1} T - [T,Z_1]
=
\Gamma_{01}^1 Z_1 - [T,Z_1],
\end{equation*}
we find that,
\begin{equation}
[T,Z_1]
=
\Gamma_{01}^1 Z_1 - A_1{}^{\ob} Z_{\ob}.
\end{equation}
By explicitly computation of $[T,Z_1]$, we find that
\begin{align*}
-i[T,Z_1]
& =
[(\xi - \overline{\xi}), Z_1]\\
& =
\left(\xi(\varrho _w) - \bar{\xi}(\varrho _w) - Z_1(\xi^1) \right)\partial_z
-
\left(\xi(\varrho _z) - \overline{\xi}(\varrho _z) + Z_1(\xi^2)\right) \partial_w \\
& \quad -
\left(Z_1(\xi^{\ob})\partial_{\zba} + Z_1(\xi^{\bar{2}}) \partial_{\wba} \right).
\end{align*}
By taking the $(1,0)$-parts of both sides, we deduce (after some simplifications) the formula for $\Gamma^1_{01}$. Likewise, by taking the $(0,1)$-parts of both sides, we obtain \eqref{e:tors}. The proof is complete.
\end{proof}
For the curvature and torsion, we have the following
\begin{proposition}[Li--Luk \cite{li--luk}]\label{prop:ct} Under the notations above, the Webster scalar curvature of $(M,i\bar{\partial}\varrho )$ is given by
\begin{align}
R 
=
\frac{1}{\J}\left(2\det \varrho _{Z\Zba}  - N \log \J -  \Lba L \log \J\right),
\end{align}
and the torsion in the frame $Z_1$ is given by
\begin{equation}\label{eq:tor}
iA_{11} = \frac{\J L\left(\xi^{\ob}\right)}{\varrho _{\wba}},
\end{equation}
where $\J$ is the Levi--Fefferman determinant and $L:= \varrho_w \partial_z - \varrho_z \partial_w$.
\end{proposition}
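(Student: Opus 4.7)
The plan is to derive both identities directly from Proposition~\ref{prop:2.1} in combination with the Tanaka--Webster structure equations recorded earlier in this section.

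The torsion identity is essentially a component comparison in~\eqref{e:tors}. Since $Z_{\ob} = \Lba = \varrho_{\wba}\partial_{\zba} - \varrho_{\zba}\partial_{\wba}$, matching the $\partial_{\zba}$-components on the two sides of $iA_1{}^{\ob} Z_{\ob} = Z_1(\xi^{\ob})\partial_{\zba} + Z_1(\xi^{\bar 2})\partial_{\wba}$ gives $iA_1{}^{\ob}\,\varrho_{\wba} = L(\xi^{\ob})$. Lowering the index via $h_{1\ob} = \J$ then yields $iA_{11} = h_{1\ob}\,iA_1{}^{\ob} = \J\,L(\xi^{\ob})/\varrho_{\wba}$, which is the claim.

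For the scalar curvature I would substitute $\omega_1{}^1 = (L\log\J)\,\theta^1 + \Gamma_{01}^1\,\theta$ (from Proposition~\ref{prop:2.1}) into the structure equation
\[
d\omega_1{}^1 = R\,h_{1\ob}\,\theta^1\wedge\theta^{\ob} + A_1{}^{\ob}{}_{,\ob}\,\theta^1\wedge\theta - A_{\ob}{}^1{}_{,1}\,\theta^{\ob}\wedge\theta,
\]
and read off the $\theta^1\wedge\theta^{\ob}$-coefficient. For any function $f$ on $M$ one has $df = Lf\,\theta^1 + \Lba f\,\theta^{\ob} + Tf\,\theta$, so $d(L\log\J)\wedge\theta^1$ contributes $-\Lba L\log\J$ to that coefficient. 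Using $d\theta = i\J\,\theta^1\wedge\theta^{\ob}$, the term $\Gamma_{01}^1\,d\theta$ contributes $i\J\,\Gamma_{01}^1 = -N\log\J + 2\det\varrho_{Z\Zba}$. The remaining pieces $(L\log\J)\,d\theta^1$ and $d\Gamma_{01}^1\wedge\theta$ only produce $\theta^1\wedge\theta$ and $\theta^{\ob}\wedge\theta$ terms, matching the torsion-derivative terms on the right. Summing and dividing by $\J$ gives the stated formula for~$R$.

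The one place needing care is the bookkeeping: the $\theta^1\wedge\theta$ and $\theta^{\ob}\wedge\theta$ contributions to $d\omega_1{}^1$ (originating from the $T$-components of $d(L\log\J)$ and $d\Gamma_{01}^1$, together with the $A_{\ob}{}^1\theta\wedge\theta^{\ob}$-piece of $d\theta^1$) must be shown to assemble exactly into $A_1{}^{\ob}{}_{,\ob}$ and $-A_{\ob}{}^1{}_{,1}$, so that no stray term contaminates the scalar-curvature coefficient. Once this is checked, both identities drop out by inspection.
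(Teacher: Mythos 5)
Your derivations are correct, and they differ from the paper's treatment mainly in being self-contained: the paper proves this proposition essentially by citation to Li--Luk (their Theorem~1.1 and formula (2.13)) together with a change of holomorphic frame, noting only that the torsion formula ``can also be derived from \eqref{e:tors}'' --- which is precisely the route you take, matching the $\partial_{\zba}$-components of \eqref{e:tors} against $Z_{\ob}=\varrho_{\wba}\partial_{\zba}-\varrho_{\zba}\partial_{\wba}$ and lowering the index with $h_{1\ob}=\J$, so that $iA_{11}=\J\,L(\xi^{\ob})/\varrho_{\wba}$ drops out. For the scalar curvature your argument is genuinely different from the paper's: instead of importing Li--Luk's curvature formula, you substitute $\omega_1{}^1=(L\log\J)\,\theta^1+\Gamma_{01}^1\,\theta$ from Proposition~\ref{prop:2.1} into the structure equation for $d\omega_1{}^1$ and read off the $\theta^1\wedge\theta^{\ob}$-coefficient, using $d\theta=i\J\,\theta^1\wedge\theta^{\ob}$ and $i\J\,\Gamma_{01}^1=2\det\varrho_{Z\Zba}-N\log\J$; this yields $R\J=2\det\varrho_{Z\Zba}-N\log\J-\Lba L\log\J$ exactly as stated, and it has the advantage of resting only on material already established in the paper, at the cost of a short exterior-derivative computation. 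One small remark: the ``place needing care'' you flag at the end is not actually needed. Since $\theta^1\wedge\theta^{\ob}$, $\theta^1\wedge\theta$, and $\theta^{\ob}\wedge\theta$ are pointwise linearly independent, no $\theta$-term can contaminate the $\theta^1\wedge\theta^{\ob}$-coefficient, and the structure equation is already known to hold with the remaining coefficients equal to $A_1{}^{\ob}{}_{,\ob}$ and $-A_{\ob}{}^1{}_{,1}$; so extracting $Rh_{1\ob}$ requires no bookkeeping of those terms, and your proof is complete without that verification.
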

These formulas above can be derived from \cite[Theorem~1.1 and (2.13)]{li--luk} and a simple change of holomorphic frame (for the component of the torsion). The formula for the torsion can also be derived from \eqref{e:tors}. We leave the details to the readers.
\section{The Cartan CR umbilical tensor on ``pluriharmonic perturbations'' of the sphere}
We compute the Cartan tensor for the manifold $M$ defined by
\begin{equation}\label{e:perbsph}
\varrho (z,w): = -1 + |z|^2 + |w|^2 + 2\Re (f(z,w)) = 0,
\end{equation}
where $f(z,w)$ is holomorphic. To this end, we shall compute four pseudohermitian invariants appearing on the right-hand side of \eqref{e:chenglee}. Our formula for the Cartan tensor in this special case will be simpler than those in general case  \cite{ebenfelt2019new,foo2018parametric}, as various ``mixed'' derivatives of $\varrho$ of orders greater than 2 vanish.

The pseudohermitian invariants on $M$ we shall consider are with respect to the contact form $\theta = \iota^{\ast}(i\bar{\partial}\varrho )$ and the holomorphic coframe $\theta^1$. We shall work with the holomorphic frame
\begin{equation}
Z_1 = L = \varrho _w \partial_z - \varrho _z\partial_w,
\quad
N = \varrho _{\zba}\partial_z + \varrho _{\wba} \partial_w
\end{equation}
with 
\begin{equation}
\varrho _z = \zba + f_z, \quad 
\varrho _w = \wba + f_w.
\end{equation}
We can verify that when restricted to $M$,
\begin{equation}
\J\bigl|_M = \varrho _{Z\Zba}(L, \Lba)\bigl|_M = |\varrho _z|^2 + |\varrho _w|^2.
\end{equation}
Here, $Z=(z,w)$ is the coordinates in $\mathbb{C}^2$, $\varrho _{Z\Zba}$ is the complex Hessian, so that for $X = x^j \partial_j$ and $\overline{Y} = y^{\kba} \partial_{\kba} $ (summation convention), we have
\[
\varrho _{Z\Zba} (X, \overline{Y}) = \varrho _{j\kba} x^j y^{\kba}.
\]
We also use similar and self-explanatory notations for the second, the third and the fourth orders (ordinary) partial derivatives $\varrho _{ZZ}$, $\varrho _{ZZZ}$, and $\varrho _{ZZZZ}$ which act on ordered pairs, triples, or quadruples of vectors in $T^{(1,0)} \CC^2$, respectively. They certainly depend on the chosen coordinates of $\mathbb{C}^2$.

Let's start with the following formulas for the curvature and torsion of $M$ defined by \eqref{e:perbsph}.
\begin{lemma}[Gauß equations]\label{lem:rt} If $\varrho $ is given as in \eqref{e:perbsph}, then the torsion $A_{11}$ in the frame $Z_1=L:=\varrho _w \partial_z - \varrho _z \partial_w$ and the Webster scalar curvature $R$ of $(M,i\bar{\partial}\varrho )$ are given by
\begin{equation}\label{e:tor}
iA_{11} = 
\frac{\varrho _{ZZ}(L,L)}{\J}
\end{equation}
and 
\begin{equation}\label{e:scal}
R
= 
\frac{2}{\J}
-\frac{|\varrho _{ZZ}(L,L)|^2}{\J^3},
\end{equation}
respectively.
\end{lemma}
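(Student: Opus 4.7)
The strategy is to specialize the Li--Luk formulas of Propositions~\ref{prop:2.1} and~\ref{prop:ct} to the defining function $\varrho = -1 + |z|^2 + |w|^2 + 2\Re f$. The decisive structural feature is that the complex Hessian of $\varrho$ is the identity matrix: $\varrho_{z\bar z} = \varrho_{w\bar w} = 1$ and $\varrho_{z\bar w} = 0$. Consequently $\det \varrho_{Z\bar Z} = 1$, and expanding the determinant in~\eqref{e:lf} gives $\J = \varrho_z\varrho_{\bar z} + \varrho_w\varrho_{\bar w} - \varrho$ as a function on all of $\CC^2$.

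The torsion formula~\eqref{e:tor} is the easier half. I would begin with $iA_{11} = \J\, L(\varrho_z/\J)/\varrho_{\bar w}$ from Proposition~\ref{prop:ct} and compute the first-order identities $L(\varrho_z) = \varrho_w f_{zz} - \varrho_z f_{zw}$, $L(\varrho_w) = \varrho_w f_{zw} - \varrho_z f_{ww}$, $L(\varrho_{\bar z}) = \varrho_w$, $L(\varrho_{\bar w}) = -\varrho_z$. A short Leibniz expansion yields the two clean identities $\varrho_w L(\varrho_z) - \varrho_z L(\varrho_w) = \varrho_{ZZ}(L,L)$ and $L(\J) = \varrho_{ZZ}(N,L)$ (the ``non-$f$'' contributions cancelling by antisymmetry together with $L(\varrho) \equiv 0$). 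Substituting $\J\, L(\varrho_z/\J) = L(\varrho_z) - \varrho_z L(\J)/\J$ and using $\J = \varrho_z\varrho_{\bar z} + \varrho_w\varrho_{\bar w}$ on $M$ produces a factor of $\varrho_{\bar w}$ in the numerator that cancels the denominator and yields~\eqref{e:tor}.

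For the scalar curvature~\eqref{e:scal}, I would use $R = (2\det\varrho_{Z\bar Z} - N\log\J - \Lba L\log\J)/\J$. The parallel Leibniz computation for $N$ yields $N(\J) = \varrho_{ZZ}(N,N)$ exactly (the non-$f$ quadratic contributions from $N(\varrho_z\varrho_{\bar z} + \varrho_w\varrho_{\bar w})$ annihilate $-N(\varrho)$), so $N\log\J = \varrho_{ZZ}(N,N)/\J$. The more involved step is $\Lba L\log\J$: starting from $L\log\J = \varrho_{ZZ}(N,L)/\J$ and applying the analogous first-order identities for $\Lba$, a Leibniz expansion should produce a representation of the form
\[
	\Lba L\log\J = \frac{-\varrho_{ZZ}(N,N) + T}{\J} - \frac{|\varrho_{ZZ}(N,L)|^2}{\J^2},
\]
where $T$ is an explicit real polynomial of bidegree $(2,2)$ in the entries $f_{zz},f_{zw},f_{ww}$ and their conjugates, weighted by monomials in $\varrho_z,\varrho_w,\varrho_{\bar z},\varrho_{\bar w}$.

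Substituting back into $R\,\J = 2 - N\log\J - \Lba L\log\J$, the $\varrho_{ZZ}(N,N)$ terms cancel and~\eqref{e:scal} reduces to the algebraic identity
\[
	\J\cdot T \;=\; |\varrho_{ZZ}(L,L)|^2 + |\varrho_{ZZ}(N,L)|^2.
\]
I would verify this by expanding both sides in the holomorphic and antiholomorphic variables and matching coefficients of each monomial $f_{\alpha}\overline{f_{\beta}}$; the individual coefficients factor cleanly through $\J = \varrho_z\varrho_{\bar z} + \varrho_w\varrho_{\bar w}$. This quartic polynomial identity is the main technical obstacle---elementary but lengthy---and I would organize the bookkeeping around the $z\leftrightarrow w$ symmetry of $L$, $N$, and $\J$ to keep the verification manageable.
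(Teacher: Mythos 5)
Your proposal is correct and takes essentially the same route as the paper: both specialize the Li--Luk formulas of Propositions~\ref{prop:2.1} and~\ref{prop:ct} to the pluriharmonic perturbation, use $L\J=\varrho_{ZZ}(N,L)$ and $N\J=\varrho_{ZZ}(N,N)$, and reduce \eqref{e:scal} to the same key second-order identity for $\Lba L\J$ (your $\J\cdot T=|\varrho_{ZZ}(L,L)|^2+|\varrho_{ZZ}(N,L)|^2$ is precisely the paper's displayed identity), while your torsion calculation via $\xi^{\ob}=\varrho_z/\J$ fills in the details the paper omits. The only quibble is cosmetic: $T$ is of bidegree $(1,1)$, not $(2,2)$, in the Hessian entries of $f$ and $\bar f$, which does not affect the argument.
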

\begin{proof} 
These formulas follow easily from Li--Luk's results in general case \cite{li--luk} as in Proposition~\ref{prop:ct}. In fact, observe that
\begin{equation}
\Lba L \J = \frac{1}{\J}\left(|\varrho _{ZZ}(L,L)|^2 + |L\J|^2\right) - N \J.
\end{equation}
Both sides are polynomial expressions in the derivatives of $\varrho $.
Hence
\begin{equation}
\Lba L \log \J
= 
-\frac{\varrho _{ZZ}(N,N)}{\J} + \left|\frac{\varrho _{ZZ}(L,L)}{\J}\right|^2.
\end{equation}
Then \cref{e:scal} follows easily. We can also derive \cref{e:tor} from the formula for the torsion. We omit the details.
\end{proof}
Equations \eqref{e:scal} and \eqref{e:tor} are manifestations of Gau\ss{} equations for semi-isometric CR immersions in \cite{son2021semi}, see also \cite{reiter2021chern}. 
\subsection{The term $A_{11,0}$}
To compute $A_{11,0}$ we shall need a formula for the Christoffel symbol $\Gamma^1_{01}$.
\begin{lemma} If $\varrho $ is given by \cref{e:perbsph}, then Christoffel symbol
\begin{equation}\label{e:g101}
i\Gamma_{01}^1
= \frac{2}{\J} - \frac{\varrho _{ZZ}(N,N)}{\J^2}.
\end{equation}
\end{lemma}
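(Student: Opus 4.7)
The plan is to specialize the general expression for $\Gamma_{01}^1$ from Proposition~\ref{prop:2.1} to the pluriharmonic perturbation $\varrho = -1 + |z|^2 + |w|^2 + 2\Re f$ and to simplify using two observations about its derivatives. From Proposition~\ref{prop:2.1} one has
\begin{equation*}
\Gamma_{01}^1 = \frac{i}{\J}\bigl(N\log\J - 2\det \varrho_{Z\Zba}\bigr),
\qquad \text{hence}\qquad
i\Gamma_{01}^1 = \frac{2\det \varrho_{Z\Zba}}{\J} - \frac{N\J}{\J^2}.
\end{equation*}
Because $f(z,w)$ is holomorphic, every mixed second derivative of $2\Re f$ vanishes, so the $2\times 2$ complex Hessian of $\varrho$ is the identity: $\varrho_{z\bar z}=\varrho_{w\bar w}=1$ and $\varrho_{z\bar w}=\varrho_{w\bar z}=0$. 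Consequently $\det\varrho_{Z\Zba}=1$, and the first term collapses to $2/\J$. It therefore suffices to establish the identity $N\J = \varrho_{ZZ}(N,N)$.

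To get this, I would use the explicit polynomial expression for the Levi--Fefferman determinant obtained by expanding \eqref{e:lf} with the identity mixed block: a short cofactor computation yields
\begin{equation*}
\J = -\varrho + |\varrho_z|^2 + |\varrho_w|^2,
\end{equation*}
consistent with the restriction formula $\J|_M = |\varrho_z|^2 + |\varrho_w|^2$ quoted in the section. Differentiating with respect to $z$ and using the identity Hessian, the term $-\varrho_z$ produced by $\partial_z(-\varrho)$ is exactly cancelled by the term $\varrho_z\cdot\varrho_{z\bar z}=\varrho_z$ coming from $\partial_z|\varrho_z|^2$; the analogous cancellation occurs in $\partial_w\J$. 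What survives is
\begin{equation*}
\partial_z\J = \varrho_{zz}\varrho_{\zba} + \varrho_{zw}\varrho_{\wba},
\qquad
\partial_w\J = \varrho_{zw}\varrho_{\zba} + \varrho_{ww}\varrho_{\wba}.
\end{equation*}

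Finally, contracting with $N = \varrho_{\zba}\partial_z + \varrho_{\wba}\partial_w$ gives
\begin{equation*}
N\J = \varrho_{zz}\varrho_{\zba}^2 + 2\varrho_{zw}\varrho_{\zba}\varrho_{\wba} + \varrho_{ww}\varrho_{\wba}^2 = \varrho_{ZZ}(N,N),
\end{equation*}
which, substituted into the expression above for $i\Gamma_{01}^1$, delivers \eqref{e:g101}. The only mild subtlety, and essentially the sole non-routine point, is that in Proposition~\ref{prop:2.1} the quantity $\J$ must be taken as the polynomial defined by \eqref{e:lf} on a neighborhood of $M$, not as the boundary function $|\varrho_z|^2 + |\varrho_w|^2$; the $-\varrho$ term, although it vanishes on $M$, is needed to produce the cancellation above, so that $N\J$ collapses to a pure holomorphic-Hessian quantity. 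No serious obstacle arises beyond this bookkeeping.
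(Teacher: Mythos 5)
Your argument is correct and is exactly the ``direct calculation from Proposition~\ref{prop:2.1}'' that the paper's one-line proof leaves implicit: specialize $\Gamma_{01}^1=\frac{i}{\J}(N\log\J-2\det\varrho_{Z\Zba})$, use $\det\varrho_{Z\Zba}=1$ for the pluriharmonic perturbation, and verify $N\J=\varrho_{ZZ}(N,N)$ (an identity the paper itself invokes later). Your remark that $\J$ must be taken as the off-shell polynomial $-\varrho+|\varrho_z|^2+|\varrho_w|^2$ rather than its restriction to $M$ is the right bookkeeping point, since $N$ is not tangent to $M$.
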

\begin{proof}
The proof follows from Proposition \ref{prop:2.1} via a direct calculation.
\end{proof}
\begin{lemma} If $\varrho $ is given by \cref{e:perbsph}, then
\begin{align}\label{e:a110}
A_{11,0}
=
\frac{2 \det \varrho _{ZZ}}{\J} + \frac{2\varrho _{ZZ}(L,L)+\varrho _{ZZZ}(N,L,L)}{\J^2} \notag \\ + \frac{\varrho _{ZZ}(L,L)\left(\overline{\varrho _{ZZ}(N,N)} - 3\varrho _{ZZ}(N,N)\right)}{\J^3}.
\end{align}
\end{lemma}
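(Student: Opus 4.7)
The strategy is to compute $A_{11,0}$ directly as a covariant derivative along the Reeb vector. Because $A_{11}$ is the scalar component in the frame $Z_1=L$ of a symmetric tensor with two lower $(1,0)$-indices, and Proposition~\ref{prop:2.1} yields $\Gamma_{\bar 1 1}^1 = 0$, only $\Gamma_{01}^1$ contributes and the formula collapses to $A_{11,0} = T(A_{11}) - 2\Gamma_{01}^1 A_{11}$. Using \eqref{e:tor} for $A_{11}$ and \eqref{e:g101} for $\Gamma_{01}^1$, the product $-2\Gamma_{01}^1 A_{11}$ becomes $4\varrho_{ZZ}(L,L)/\J^2 - 2\varrho_{ZZ}(L,L)\varrho_{ZZ}(N,N)/\J^3$, so everything reduces to the evaluation of $T(A_{11})$.

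For the latter, I would rewrite $T = i(N-\bar N)/\J$ as a derivation on scalar functions (using $N=\J\xi$), so that $T(A_{11}) = -iT(\varrho_{ZZ}(L,L)/\J)$ expands by the quotient rule into $N$ and $\bar N$ applied to $\J$ and to $\varrho_{ZZ}(L,L)$. The pluriharmonic form \eqref{e:perbsph} makes the bookkeeping manageable. The antiholomorphic vector $\bar N = \varrho_z\partial_{\bar z}+\varrho_w\partial_{\bar w}$ satisfies $\bar N(\varrho_z)=\varrho_z$ and $\bar N(\varrho_w)=\varrho_w$ (since $\partial_{\bar z}\varrho_z=\partial_{\bar w}\varrho_w=1$ with vanishing cross derivatives) and annihilates any holomorphic function; consequently $\bar N(\varrho_{ZZ}(L,L))=2\varrho_{ZZ}(L,L)$ and $\bar N(\J)=\J+\overline{\varrho_{ZZ}(N,N)}$. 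The parallel calculation for $N$ gives $N(\J)=\J+\varrho_{ZZ}(N,N)$ and $N(\varrho_{ZZ}(L,L)) = \varrho_{ZZZ}(N,L,L) + 2\J\det\varrho_{ZZ}$.

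Combining these four identities via the quotient rule yields
\begin{equation*}
T(A_{11}) = \frac{2\det\varrho_{ZZ}}{\J} + \frac{\varrho_{ZZZ}(N,L,L) - 2\varrho_{ZZ}(L,L)}{\J^2} + \frac{\varrho_{ZZ}(L,L)\bigl(\overline{\varrho_{ZZ}(N,N)} - \varrho_{ZZ}(N,N)\bigr)}{\J^3},
\end{equation*}
and adding the contribution of $-2\Gamma_{01}^1 A_{11}$ computed in the first paragraph produces \eqref{e:a110} after collecting terms. The one nontrivial observation is the identity for $N(\varrho_{ZZ}(L,L))$: beyond the obvious cubic term $\varrho_{ZZZ}(N,L,L)$, six quadratic-in-Hessian terms arise when $N$ differentiates a factor $\varrho_z$ or $\varrho_w$ in $L$, and one needs to see that they collapse via $f_{zz}f_{ww}-f_{zw}^2=\det\varrho_{ZZ}$ together with $|\varrho_z|^2+|\varrho_w|^2=\J$ to exactly $2\J\det\varrho_{ZZ}$. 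Everything else is routine substitution.
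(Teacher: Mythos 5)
Your proof is correct and follows essentially the same route as the paper: the identity $A_{11,0}=T(A_{11})-2\Gamma_{01}^1A_{11}$, the torsion formula \eqref{e:tor}, the Christoffel symbol \eqref{e:g101}, and the expansion of $T=i\J^{-1}(N-\overline{N})$ applied to $\varrho_{ZZ}(L,L)/\J$. The only divergence is in intermediate bookkeeping: your identities $\overline{N}\bigl(\varrho_{ZZ}(L,L)\bigr)=2\varrho_{ZZ}(L,L)$ and $N\bigl(\varrho_{ZZ}(L,L)\bigr)=\varrho_{ZZZ}(N,L,L)+2\J\det\varrho_{ZZ}$ are the correct ones (the paper's displayed intermediate formulas differ by compensating $\J\det\varrho_{ZZ}$ terms which cancel in the combination $N-\overline{N}$), so both computations produce the same $T(A_{11})$ and hence \eqref{e:a110}.
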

\begin{proof}
Differentiating along $N$ and $\Nba$ directions, we find that
\begin{align*}
N(\varrho _{ZZ}(L,L))
=
\J \det \varrho _{ZZ} +
\varrho _{ZZZ}(N,L,L),
\end{align*}
and
\begin{align*}
\Nba (\varrho _{ZZ}(L,L))
= 
2\varrho _{ZZ}(L,L) -  \J \det \varrho _{ZZ}.
\end{align*}
Since $T = i\J^{-1} (N - \overline{N})$ and $N\J = \varrho _{ZZ}(N,N)$, we have 
\begin{align}
TA_{11}
& =
T\left(-\frac{i\varrho _{ZZ}(L,L)}{\J}\right) \notag  \\ 
& =
\frac{2\det \varrho _{ZZ}}{\J} + \frac{\varrho _{ZZZ}(N,L,L) - 2\varrho _{ZZ}(L,L)}{\J^2}\notag \\
& \quad + \frac{\varrho _{ZZ}(L,L)(\overline{\varrho _{ZZ}(N,N)}-\varrho _{ZZ}(N,N))}{\J^3}. 
\end{align}
Plugging this and \cref{e:g101} into the formula
\begin{equation}
A_{11,0}
=
TA_{11} - 2 \Gamma_{01}^1 A_{11},
\end{equation}
we obtain the desired identity.
\end{proof}
\subsection{The term $A_{11,}{}^{1}{}_1$}
The following three lemmas can be proved by direct calculations. We leave the details to the readers.
\begin{lemma}\label{lem44} If $\varrho $ is given by \cref{e:perbsph}, then the identity
\begin{equation}
\J^2 \det \varrho _{ZZ} + \left(\varrho _{ZZ}(N,L)\right)^2 = \varrho _{ZZ}(L,L) \varrho _{ZZ}(N,N)
\end{equation}
holds on $M$.
\end{lemma}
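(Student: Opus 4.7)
The plan is to recognize this as an instance of the classical Lagrange (or Binet--Cauchy) identity for a symmetric bilinear form on $\mathbb{C}^2$: for any symmetric matrix $M=\begin{pmatrix}a&b\\b&c\end{pmatrix}$ and any two column vectors $u=(u_1,u_2)^T$, $v=(v_1,v_2)^T$, one has
\[
(u^T M u)(v^T M v) - (u^T M v)^2 = \det(M)\,(u_1 v_2 - u_2 v_1)^2.
\]
This is a short algebraic check that I would either verify by hand or quote as a well-known identity. It is the two-dimensional analogue of Cauchy--Binet, and it has the advantage of reducing the full statement of the lemma to a one-line computation, as opposed to the brute-force expansion of six derivatives implicit in the authors' ``direct calculation''.

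I would then apply the identity with $M$ taken to be the matrix of $\varrho_{ZZ}$, which under the ansatz \eqref{e:perbsph} is $\begin{pmatrix} f_{zz} & f_{zw}\\ f_{zw} & f_{ww}\end{pmatrix}$ (symmetric since $\varrho$ contributes only $2\Re f$ to its purely holomorphic second-order part), and with $u, v$ the coefficient vectors of $N$ and $L$ in the basis $(\partial_z,\partial_w)$, namely $u=(\varrho_{\bar z},\varrho_{\bar w})^T$ and $v=(\varrho_w,-\varrho_z)^T$. Then $u^T M u$, $v^T M v$, and $u^T M v$ are respectively $\varrho_{ZZ}(N,N)$, $\varrho_{ZZ}(L,L)$, $\varrho_{ZZ}(N,L)$, so the Lagrange identity becomes
\[
\varrho_{ZZ}(N,N)\,\varrho_{ZZ}(L,L) - \varrho_{ZZ}(N,L)^2 = \det \varrho_{ZZ}\cdot (u_1 v_2 - u_2 v_1)^2.
\]

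To finish, I would evaluate the Jacobian factor
\[
u_1 v_2 - u_2 v_1 = -\varrho_{\bar z}\varrho_z - \varrho_{\bar w}\varrho_w = -\bigl(|\varrho_z|^2 + |\varrho_w|^2\bigr),
\]
which, by the formula $\J|_M = |\varrho_z|^2+|\varrho_w|^2$ recorded just before the lemma, equals $-\J$ on $M$. Squaring produces $\J^2$, and rearranging gives precisely the asserted identity. I do not expect any genuine obstacle here; the only points to watch are the symmetry of the matrix of $\varrho_{ZZ}$ (which uses the pluriharmonic form of $\varrho$) and the sign convention in the coefficients of $L=\varrho_w\partial_z-\varrho_z\partial_w$, which contributes the crucial minus sign in the second entry of $v$.
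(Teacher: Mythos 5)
Your proof is correct, and since the paper gives no written argument for this lemma (it is dismissed as a ``direct calculation''), your Gram-determinant route is a clean way to organize exactly that computation. The key identity you use is $\det(P^{\top}\varrho_{ZZ}\,P)=\det(\varrho_{ZZ})(\det P)^{2}$, where $P$ has columns $u=(\varrho_{\bar z},\varrho_{\bar w})^{\top}$ and $v=(\varrho_w,-\varrho_z)^{\top}$, so that the entries of $P^{\top}\varrho_{ZZ}P$ are $\varrho_{ZZ}(N,N)$, $\varrho_{ZZ}(N,L)$, $\varrho_{ZZ}(L,L)$; this gives $\varrho_{ZZ}(N,N)\varrho_{ZZ}(L,L)-\varrho_{ZZ}(N,L)^{2}=\det\varrho_{ZZ}\,(\varrho_z\varrho_{\bar z}+\varrho_w\varrho_{\bar w})^{2}$, and with $\J|_M=|\varrho_z|^{2}+|\varrho_w|^{2}$ the lemma follows. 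Two small remarks: the symmetry of the matrix of $\varrho_{ZZ}$ is automatic for any smooth $\varrho$ (mixed holomorphic second partials commute), so it does not depend on the pluriharmonic form of $\varrho$ as you suggest; the pluriharmonic ansatz \eqref{e:perbsph} enters only through the evaluation $\J|_M=|\varrho_z|^{2}+|\varrho_w|^{2}$ (equivalently $\varrho_{Z\bar Z}=\mathrm{id}$ together with $\varrho=0$ on $M$), which is precisely why the identity is asserted only on $M$ and would acquire a correction term $-\varrho$ in $\J$ off $M$. Your approach has the added benefit of making this dependence explicit, whereas a brute-force expansion hides it.
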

\begin{lemma}[Mainardi equation] If $\varrho $ is given by \cref{e:perbsph}, then 
\begin{equation}
\Lba\left(\varrho _{ZZ}(L,L)\right) + 2 \varrho _{ZZ}(N,L) = 0.
\end{equation}
\end{lemma}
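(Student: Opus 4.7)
The plan is to exploit the very special structure of $\varrho$: the quadratic pluriharmonic piece $|z|^2+|w|^2$ and the holomorphic term $f(z,w)$ make most second-order mixed derivatives vanish, which in turn forces a clean identity for $\Lba L$.

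First I would record the following elementary facts about $\varrho = -1 + |z|^2+|w|^2 + 2\Re f$: the mixed Hessian is the identity, i.e.\ $\varrho_{z\zba} = \varrho_{w\wba} = 1$ and $\varrho_{z\wba} = \varrho_{w\zba} = 0$, while the pure holomorphic second derivatives coincide with those of $f$, so $\varrho_{jk}$ ($j,k\in\{z,w\}$) is holomorphic. Consequently, for any $(0,1)$-vector field $X$, we have $X(\varrho_{jk}) = 0$.

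Next I would compute $\Lba L$, treating $L = \varrho_w \partial_z - \varrho_z \partial_w$ as a section of $T^{1,0}\CC^2$ and letting $\Lba = \varrho_{\wba}\partial_{\zba} - \varrho_{\zba}\partial_{\wba}$ act on its components. A two-line calculation, using the vanishing/normalization of the mixed derivatives, gives
\begin{align*}
\Lba(\varrho_w) &= \varrho_{\wba}\varrho_{w\zba} - \varrho_{\zba}\varrho_{w\wba} = -\varrho_{\zba}, \\
\Lba(-\varrho_z) &= -\varrho_{\wba}\varrho_{z\zba} + \varrho_{\zba}\varrho_{z\wba} = -\varrho_{\wba},
\end{align*}
so that
\[
\Lba L \;=\; -\varrho_{\zba}\partial_z - \varrho_{\wba}\partial_w \;=\; -N.
\]
This key identity $\Lba L = -N$ captures the whole geometric content; everything else is bookkeeping.

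Finally I would expand $\Lba(\varrho_{ZZ}(L,L))$ by the Leibniz rule. Writing $\varrho_{ZZ}(L,L) = \varrho_{jk}L^j L^k$ and using the symmetry $\varrho_{jk}=\varrho_{kj}$ together with $\Lba(\varrho_{jk})=0$, one obtains
\[
\Lba\bigl(\varrho_{ZZ}(L,L)\bigr)
= 2\,\varrho_{jk}\bigl(\Lba L^j\bigr) L^k
= 2\,\varrho_{ZZ}(\Lba L,\,L)
= -2\,\varrho_{ZZ}(N,L),
\]
which is exactly the Mainardi equation. There is essentially no obstacle here: the entire proof reduces to the identity $\Lba L = -N$, and the only thing to be careful about is the componentwise interpretation of $\Lba L$ when applying $\Lba$ to the scalar $\varrho_{ZZ}(L,L)$.
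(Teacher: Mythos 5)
Your proof is correct, and it is essentially the ``direct calculation'' the paper has in mind (the paper states this lemma without proof, leaving the details to the reader). The computation checks out: since $\varrho_{jk}=f_{jk}$ is holomorphic, $\Lba$ kills the Hessian entries, and your componentwise identity $\Lba L=-N$ (which follows from $\varrho_{z\zba}=\varrho_{w\wba}=1$, $\varrho_{z\wba}=0$) together with the Leibniz rule and the symmetry of $\varrho_{ZZ}$ gives exactly $\Lba\left(\varrho_{ZZ}(L,L)\right)=-2\varrho_{ZZ}(N,L)$; this is a clean way to organize the bookkeeping.
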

\begin{lemma}  If $\varrho $ is given by \cref{e:perbsph}, then
\begin{align}
L\J & = \varrho _{ZZ}(N,L),\\
LL\J 
& =
\varrho _{ZZ}(L,L) - (\J + \varrho ) \det \varrho _{ZZ}.
\end{align}
\end{lemma}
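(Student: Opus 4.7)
The plan is to compute both identities off $M$ starting from the explicit polynomial form
\begin{equation*}
\J = -\varrho + \varrho_z \varrho_{\bar z} + \varrho_w \varrho_{\bar w},
\end{equation*}
which follows from expanding the $3\times 3$ determinant in \eqref{e:lf} together with the identities $\varrho_{z\bar z} = \varrho_{w\bar w} = 1$ and $\varrho_{z\bar w} = \varrho_{w\bar z} = 0$ (both automatic because $\varrho$ differs from the sphere's defining function by a pluriharmonic term). The strategy is then to evaluate $L\J$ and $LL\J$ directly using $L=\varrho_w\partial_z - \varrho_z\partial_w$ and Leibniz.

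For the first identity, I would use $L\varrho = 0$ (by antisymmetry of $L$) together with the elementary relations $L\varrho_z = \varrho_w\varrho_{zz} - \varrho_z\varrho_{zw}$, $L\varrho_w = \varrho_w\varrho_{zw} - \varrho_z\varrho_{ww}$, $L\varrho_{\bar z} = \varrho_w$, and $L\varrho_{\bar w} = -\varrho_z$; the last two reflect the pluriharmonic hypothesis. The antisymmetric combination $\varrho_z\cdot L\varrho_{\bar z} + \varrho_w\cdot L\varrho_{\bar w}$ cancels, leaving $L\J = (L\varrho_z)\varrho_{\bar z} + (L\varrho_w)\varrho_{\bar w}$, which assembles into $\varrho_{ZZ}(N,L)$ once one reads off $N = \varrho_{\bar z}\partial_z + \varrho_{\bar w}\partial_w$.

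For the second identity, I would apply $L$ once more to $L\J = \varrho_{ZZ}(N,L)$ written as $\varrho_w M^1 - \varrho_z M^2$, where $M^1 := \varrho_{zz}\varrho_{\bar z} + \varrho_{zw}\varrho_{\bar w}$ and $M^2 := \varrho_{zw}\varrho_{\bar z} + \varrho_{ww}\varrho_{\bar w}$. This splits $LL\J$ as $\bigl[(L\varrho_w)M^1 - (L\varrho_z)M^2\bigr] + \bigl[\varrho_w LM^1 - \varrho_z LM^2\bigr]$. The first bracket, after inserting the formulas for $L\varrho_w, L\varrho_z$ and invoking the identity $\varrho_{zw}^2 - \varrho_{zz}\varrho_{ww} = -\det\varrho_{ZZ}$, collapses to $-(\varrho_z\varrho_{\bar z} + \varrho_w\varrho_{\bar w})\det\varrho_{ZZ} = -(\J+\varrho)\det\varrho_{ZZ}$. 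In the second bracket, the terms in which $L$ differentiates an antiholomorphic factor $\varrho_{\bar z}$ or $\varrho_{\bar w}$ produce $\varrho_w^2\varrho_{zz} - 2\varrho_w\varrho_z\varrho_{zw} + \varrho_z^2\varrho_{ww} = \varrho_{ZZ}(L,L)$, while the terms carrying a genuine third derivative of $\varrho$ assemble into $\varrho_{ZZZ}(L,L,N)$ and vanish identically in the ellipsoid setting because $f(z,w) = \tfrac12(az^2 + bw^2)$ is quadratic. The only real obstacle is the bookkeeping of the roughly dozen monomials produced in the second bracket, which is organized cleanly by the $M^1, M^2$ split above.
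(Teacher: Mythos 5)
Your route is exactly the ``direct calculation'' the paper leaves to the reader, and the first identity is done correctly: from $\J=-\varrho+\varrho_z\varrho_{\bar{z}}+\varrho_w\varrho_{\bar{w}}$, the relations $L\varrho_{\bar{z}}=\varrho_w$, $L\varrho_{\bar{w}}=-\varrho_z$ and the cancellation you point out give $L\J=\varrho_{ZZ}(N,L)$. The bookkeeping for the second identity is also right: the bracket $(L\varrho_w)M^1-(L\varrho_z)M^2$ does collapse to $-(\varrho_z\varrho_{\bar{z}}+\varrho_w\varrho_{\bar{w}})\det\varrho_{ZZ}=-(\J+\varrho)\det\varrho_{ZZ}$, and the remaining terms produce $\varrho_{ZZ}(L,L)+\varrho_{ZZZ}(N,L,L)$.

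The gap is the final step, where you drop $\varrho_{ZZZ}(N,L,L)$ ``because $f$ is quadratic.'' The lemma is stated for $\varrho$ as in \eqref{e:perbsph} with $f$ an \emph{arbitrary} holomorphic function, so quadraticity is not among the hypotheses, and the term does not vanish in general: for instance, for $f=z^3/6$ one has $\det\varrho_{ZZ}=0$, $LL\J=\varrho_w^2(z+\varrho_{\bar{z}})=\varrho_{ZZ}(L,L)+\varrho_{\bar{z}}\varrho_w^2$, which differs from $\varrho_{ZZ}(L,L)-(\J+\varrho)\det\varrho_{ZZ}$. What your computation actually establishes is $LL\J=\varrho_{ZZ}(L,L)-(\J+\varrho)\det\varrho_{ZZ}+\varrho_{ZZZ}(N,L,L)$; in other words, the printed statement omits the third-order term, and the corrected form is the one the paper itself relies on later --- restricted to $M$ it gives $L(\varrho_{ZZ}(N,L))=\varrho_{ZZ}(L,L)-\J\det\varrho_{ZZ}+\varrho_{ZZZ}(N,L,L)$ in the proof of \eqref{e:a1111}, and it produces the expansion of $LL\log\J$ containing $\varrho_{ZZZ}(N,L,L)/\J$ used in the proof of \eqref{e:r11}. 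So you should either carry the extra term and record the corrected identity (noting the discrepancy with the statement), or say explicitly that you prove the identity only under the additional assumption $\varrho_{ZZZ}(N,L,L)=0$ (e.g.\ the ellipsoid case), which is strictly weaker than what the lemma asserts; as written, appealing to the ellipsoid imports a hypothesis the statement does not contain.
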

\begin{proposition} With the notations as above, it holds that
\begin{align}\label{e:a1111}
i A_{11,}{}^{1}{}_{1}
=
-\frac{2\det \varrho _{ZZ}}{\J}
- \frac{2\varrho _{ZZ}(L,L) + 2\varrho _{ZZZ}(N,L,L)}{\J^2} \notag \\
+ \frac{\varrho _{ZZ}(L,L)(\overline{\varrho _{ZZ}(N,N)}+6 \varrho _{ZZ}(N,N)) - \varrho _{ZZZ}(L,L,L)\overline{\varrho _{ZZ}(N,L)}}{\J^3} \notag \\
+ \frac{\varrho _{ZZ}(L,L)(3|\varrho _{ZZ}(N,L)|^2 - |\varrho _{ZZ}(L,L)|^2)}{\J^4}.
\end{align}
\end{proposition}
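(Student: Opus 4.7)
The plan is to unwind the two covariant derivatives implicit in the symbol $A_{11,}{}^1{}_1$ applied to the torsion formula $iA_{11} = \varrho_{ZZ}(L,L)/\J$ of Lemma~\ref{lem:rt}, then collect terms using the Gau\ss-type identity of Lemma~\ref{lem44} and the Mainardi equation.

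First I would compute the single covariant derivative $A_{11,\ob}$. Because $\Gamma_{\ob 1}^1 = 0$ by Proposition~\ref{prop:2.1}, this reduces to the directional derivative $\Lba(A_{11})$. Applying $\Lba$ to $iA_{11} = \varrho_{ZZ}(L,L)/\J$ and invoking the Mainardi equation $\Lba \varrho_{ZZ}(L,L) = -2\varrho_{ZZ}(N,L)$ together with $\Lba \J = \overline{L\J} = \overline{\varrho_{ZZ}(N,L)}$ from the preceding lemma yields a closed expression for $iA_{11,\ob}$ in terms of $\varrho_{ZZ}(N,L)$, $\varrho_{ZZ}(L,L)$, $\overline{\varrho_{ZZ}(N,L)}$ and $\J$. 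Raising the conjugate index via $h^{1\ob} = 1/\J$ produces $A_{11,}{}^1 = A_{11,\ob}/\J$; by parallelism of the Levi form and the vanishing $\Gamma_{1\ob}^{\ob} = 0$, the second covariant derivative collapses to
\[
A_{11,}{}^{1}{}_{,1} = L(A_{11,}{}^1) - \Gamma_{11}^1 A_{11,}{}^1 = L(A_{11,}{}^1) - L\log\J \cdot A_{11,}{}^1,
\]
using $\Gamma_{11}^1 = L\log \J$ from Proposition~\ref{prop:2.1}.

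The remaining task is to apply $L$ to the building blocks appearing in $iA_{11,}{}^1$. The derivatives $L\J = \varrho_{ZZ}(N,L)$ and $L\varrho_{ZZ}(N,L) = LL\J = \varrho_{ZZ}(L,L) - \J\det\varrho_{ZZ}$ (on $M$, where $\varrho = 0$) are supplied by the preceding lemma. The derivative $L(\varrho_{ZZ}(L,L))$ is pure $(1,0)$; because the mixed second partials $\varrho_{z\zba}, \varrho_{w\wba}, \varrho_{z\wba}, \varrho_{w\zba}$ are constants for $\varrho$ of the form \eqref{e:perbsph}, a direct expansion in coordinates identifies it with $\varrho_{ZZZ}(L,L,L)$ plus pieces reassembled from $L\varrho_z = \varrho_w\varrho_{zz} - \varrho_z\varrho_{zw}$ and $L\varrho_w = \varrho_w\varrho_{zw} - \varrho_z\varrho_{ww}$. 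The remaining non-routine ingredient is $L\overline{\varrho_{ZZ}(N,L)} = L\Lba\J$, which I would compute via the structure-equation commutator $[L,\Lba] = -i\J T$ combined with the formula $\Lba L\log\J = -\varrho_{ZZ}(N,N)/\J + |\varrho_{ZZ}(L,L)/\J|^2$ derived inside the proof of Lemma~\ref{lem:rt}, and with $T\J = i(\varrho_{ZZ}(N,N) - \overline{\varrho_{ZZ}(N,N)})/\J$ obtained from $T = i(\xi - \bar\xi) = i\J^{-1}(N - \Nba)$ and $N\J = \varrho_{ZZ}(N,N)$.

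The main obstacle is the sheer bookkeeping: substituting everything into $L(A_{11,}{}^1) - L\log\J \cdot A_{11,}{}^1$ produces many terms containing the products $\varrho_{ZZ}(L,L)\overline{\varrho_{ZZ}(N,N)}$, $\varrho_{ZZ}(L,L)\varrho_{ZZ}(N,N)$, $|\varrho_{ZZ}(N,L)|^2$, $\varrho_{ZZZ}(N,L,L)$ and $\varrho_{ZZZ}(L,L,L)\overline{\varrho_{ZZ}(N,L)}$, together with powers of $\J$ in the denominators. The cancellations that collapse this mass of terms to \eqref{e:a1111} are engineered by systematically invoking Lemma~\ref{lem44} to trade $\J^2\det\varrho_{ZZ}$ for $\varrho_{ZZ}(L,L)\varrho_{ZZ}(N,N) - \varrho_{ZZ}(N,L)^2$, thereby eliminating $\det\varrho_{ZZ}$ in favor of the invariants that actually appear on the right-hand side of the stated formula.
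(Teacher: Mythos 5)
Your overall route is the one the paper follows: obtain $A_{11,}{}^{1}=h^{1\ob}\Lba(A_{11})$ from the Mainardi equation together with $\Lba\J=\overline{L\J}$, then differentiate along $L$, subtract the connection correction, and simplify with Lemma~\ref{lem44}. Two of your choices are fine and essentially equivalent to the paper's. Your reduction $A_{11,}{}^{1}{}_{1}=L(A_{11,}{}^{1})-\Gamma_{11}^{1}A_{11,}{}^{1}$ is the correct tensorial count given $\Gamma_{\ob 1}^{1}=\Gamma_{1\ob}^{\ob}=0$; the factor $-2\Gamma_{11}^{1}$ displayed in the paper corresponds to differentiating the component $A_{11,\ob}$ \emph{before} raising with $h^{1\ob}$, and the two bookkeepings agree because $L\,h^{1\ob}=-\Gamma_{11}^{1}h^{1\ob}$. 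Likewise, your commutator route to $L\bigl(\overline{\varrho_{ZZ}(N,L)}\bigr)$ via $[L,\Lba]=-i\J\,T$, $N\J=\varrho_{ZZ}(N,N)$ and the formula for $\Lba L\log\J$ reproduces the identity $L\bigl(\overline{\varrho_{ZZ}(N,L)}\bigr)=-\overline{\varrho_{ZZ}(N,N)}+\bigl(|\varrho_{ZZ}(L,L)|^{2}+|\varrho_{ZZ}(N,L)|^{2}\bigr)/\J$ that the paper simply states.

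The genuine gap is the input $L\bigl(\varrho_{ZZ}(N,L)\bigr)$. You quote it from the lemma computing $L\J$ and $LL\J$ as $\varrho_{ZZ}(L,L)-\J\det\varrho_{ZZ}$ on $M$, but for a general holomorphic $f$ in \eqref{e:perbsph} this omits a third-order term: a direct expansion (using $L(\varrho_{\zba})=\varrho_{w}$ and $L(\varrho_{\wba})=-\varrho_{z}$) gives
\begin{equation*}
L\bigl(\varrho_{ZZ}(N,L)\bigr)=\varrho_{ZZ}(L,L)+\varrho_{ZZZ}(N,L,L)-(\J+\varrho)\det\varrho_{ZZ},
\end{equation*}
and it is this corrected identity that the paper's proof uses. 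The term $\varrho_{ZZZ}(N,L,L)$ is the only possible source of the summand $-2\varrho_{ZZZ}(N,L,L)/\J^{2}$ in \eqref{e:a1111}, since $L\bigl(\varrho_{ZZ}(L,L)\bigr)=\varrho_{ZZZ}(L,L,L)$ and $L\bigl(\overline{\varrho_{ZZ}(N,L)}\bigr)$ produce no third derivative with an $N$-slot; so with your input the computation can only reach the stated formula when $\varrho_{ZZZ}(N,L,L)\equiv 0$ (e.g.\ ellipsoids), whereas the proposition is needed for general $f$ (it feeds the $\mathcal{Q}_{3}$-part of the Cartan tensor theorem). That the quoted $LL\J$ formula fails in general is easy to check: for $f=z^{3}/6$ one finds $LL\J=(2z+\zba^{2}/2)\wba^{2}$, while $\varrho_{ZZ}(L,L)-(\J+\varrho)\det\varrho_{ZZ}=z\wba^{2}$. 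A smaller point about the last step: in \eqref{e:a1111} the determinant survives and it is the holomorphic square $\bigl(\varrho_{ZZ}(N,L)\bigr)^{2}$ that must be eliminated via Lemma~\ref{lem44}, i.e.\ $\bigl(\varrho_{ZZ}(N,L)\bigr)^{2}=\varrho_{ZZ}(L,L)\varrho_{ZZ}(N,N)-\J^{2}\det\varrho_{ZZ}$; the substitution in the direction you describe (eliminating $\det\varrho_{ZZ}$) would not land on the stated form.
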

\begin{proof}
Since $\Gamma_{\ob 1}^{1} = 0$ and $h^{1\ob} = \J^{-1}$, we have that
\begin{equation}
A_{11,}{}^{1}
=
h^{1\ob} \Lba (A_{11})
=
i\left(\frac{2\varrho _{ZZ}(N,L)}{\J^2} + \frac{\varrho _{ZZ}(L,L) \overline{\varrho _{ZZ}(N,L)}}{\J^3}\right).
\end{equation}
Differentiating along $L$, we have
\begin{equation}
L\left(\varrho _{ZZ}(N,L)\right)
=
\varrho _{ZZ}(L,L) 
-
\J \det \varrho _{ZZ} + \varrho _{ZZZ}(N,L,L),
\end{equation}
and
\begin{equation}
L\left(\overline{\varrho _{ZZ}(N,L)}\right)
=
-\overline{\varrho _{ZZ}(N,N)} + \frac{|\varrho _{ZZ}(L,L)|^2 + |\varrho _{ZZ}(N,L)|^2}{\J}.
\end{equation}
Thus
\begin{align}
-i L \left(A_{11,}{}^{1}\right)
=
-\frac{2\det \varrho _{ZZ}}{\J} + \frac{2\varrho _{ZZ}(L,L) + 2\varrho _{ZZZ}(N,L,L)}{\J^2}\notag \\
- \frac{4\left(\varrho _{ZZ}(N,L)\right)^2}{\J^3} + \frac{\varrho _{ZZZ}(L,L,L) \overline{\varrho _{ZZ}(N,L)}}{\J^3}  \notag \\
- \frac{\varrho _{ZZ}(L,L) \overline{\varrho _{ZZ}(N,N)}}{\J^3} - \frac{\varrho _{ZZ}(L,L)(|\varrho _{ZZ}(L,L)|^2 - 2|\varrho _{ZZ}(N,L)|^2)}{\J^4}.
\end{align}
Plugging these into the formula
\begin{equation}
iA_{11,}{}^{1}{}_{1} = i L \left(A_{11,}{}^{1}\right) - 2i \left(\frac{\varrho _{ZZ}(N,L)}{\J}\right) A_{11,}{}^{1},
\end{equation}
and simplifying the result using Lemma~\ref{lem44}, we complete the proof.  
\end{proof}
\subsection{The term $R_{,11}$} Differentiating the scalar curvature, using Li--Luk's formula, we obtain
\begin{lemma} If $\varrho $ is given as in \eqref{e:perbsph}, then
\begin{small}
\begin{align} \label{e:r11}
R_{,11}
=
-\frac{4\det \varrho _{ZZ}}{\J} - \frac{2\varrho _{ZZ}(L,L) + 2\varrho _{ZZZ}(N,L,L)}{\J^2} \notag \\
- \frac{3\left(\det\varrho _{ZZ}\right)|\varrho _{ZZ}(L,L)|^2}{\J^3}  + \frac{2\varrho _{ZZ}(L,L)(3\varrho _{ZZ}(N,N) - \overline{\varrho _{ZZ}(N,N)})}{\J^3} \notag \\
+ \frac{\overline{\varrho _{ZZ}(L,L)}\left(\varrho _{ZZZZ}(L,L,L,L)-3 \varrho _{ZZZ}(L,L,\varrho _{ZZ} \cdot L)\right)}{\J^3} + \frac{4\varrho _{ZZZ}(L,L,L) \overline{\varrho _{ZZ}(N,L)}}{\J^3} \notag \\
+\frac{\varrho _{ZZ}(L,L)(5|\varrho _{ZZ}(L,L)|^2-12 |\varrho _{ZZ}(N,L)|^2)}{\J^4} + \frac{3|\varrho _{ZZ}(L,L)|^2 \varrho _{ZZZ}(N,L,L)}{\J^4} \notag \\
+ \frac{7\,\overline{\varrho _{ZZ}(L,L)}\varrho _{ZZ}(N,L)\varrho _{ZZZ}(L,L,L) }{\J^4} - \frac{15|\varrho _{ZZ}(L,L)|^2 \left(\varrho _{ZZ}(N,L)\right)^2}{\J^5}.
\end{align}
\end{small}

The right-hand side involves the derivatives of $\varrho $ up to 4th order.
\end{lemma}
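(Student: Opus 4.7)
The plan is to reduce $R_{,11}$ to a second directional derivative. Since $R$ is a scalar, $R_{,1}=LR$, and by Proposition~\ref{prop:2.1} the Christoffel symbol $\Gamma_{11}^{1}=L\log\J$; combined with $L\J=\varrho_{ZZ}(N,L)$ this gives
\[
R_{,11}=L(LR)-\frac{\varrho_{ZZ}(N,L)}{\J}\,LR.
\]
I would therefore differentiate the formula $R=2/\J-|\varrho_{ZZ}(L,L)|^{2}/\J^{3}$ of Lemma~\ref{lem:rt} twice along $L$ and subtract the Christoffel correction.

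For the first application of $L$, three elementary identities are enough, all derived directly from the coordinate expressions of $L$ and $N$: (a)~$L\J=\varrho_{ZZ}(N,L)$; (b)~$L\bigl(\varrho_{ZZ}(L,L)\bigr)=\varrho_{ZZZ}(L,L,L)$, which rests on the vanishing of the cross term $2\,\varrho_{ZZ}(L,L(L))$ after a direct computation of $L(L^{1}),L(L^{2})$; and (c)~$L\bigl(\overline{\varrho_{ZZ}(L,L)}\bigr)=-2\overline{\varrho_{ZZ}(N,L)}$, the complex conjugate of the Mainardi identity. The second application of $L$ reuses (a)--(c) and needs in addition the three identities that were already derived in the preceding subsection for $A_{11,}{}^{1}{}_{1}$, namely
\begin{align*}
L\bigl(\varrho_{ZZ}(N,L)\bigr)&=\varrho_{ZZ}(L,L)-\J\det\varrho_{ZZ}+\varrho_{ZZZ}(N,L,L),\\
L\bigl(\overline{\varrho_{ZZ}(N,L)}\bigr)&=-\overline{\varrho_{ZZ}(N,N)}+\frac{|\varrho_{ZZ}(L,L)|^{2}+|\varrho_{ZZ}(N,L)|^{2}}{\J},
\end{align*}
together with the one-order-higher analog of (b),
\[
L\bigl(\varrho_{ZZZ}(L,L,L)\bigr)=\varrho_{ZZZZ}(L,L,L,L)+3\,\varrho_{ZZZ}(L,L,\varrho_{ZZ}\cdot L),
\]
proved identically.

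After expanding $L(LR)$ and subtracting $(\varrho_{ZZ}(N,L)/\J)\,LR$, the result is a raw expression organised by powers $\J^{-1},\ldots,\J^{-5}$; a single use of Lemma~\ref{lem44}, $\J^{2}\det\varrho_{ZZ}=\varrho_{ZZ}(L,L)\varrho_{ZZ}(N,N)-\varrho_{ZZ}(N,L)^{2}$, then trades the leftover $\varrho_{ZZ}(N,L)^{2}/\J^{3}$ contribution for $\varrho_{ZZ}(L,L)\varrho_{ZZ}(N,N)/\J^{3}$ minus $\det\varrho_{ZZ}/\J$, after which the expression matches \eqref{e:r11} term by term. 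The main obstacle is pure bookkeeping: the second differentiation produces on the order of two dozen terms, and the specific coefficients $5,-12,3,7,-15$ appearing in the $\J^{-4}$ and $\J^{-5}$ rows of \eqref{e:r11} only emerge once the contributions from $L(2/\J)$, from $L(-|\varrho_{ZZ}(L,L)|^{2}/\J^{3})$ and from the Christoffel correction have been carefully combined and Lemma~\ref{lem44} applied exactly once at the appropriate moment to collapse the $\det\varrho_{ZZ}$ row.
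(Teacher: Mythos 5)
Your plan is correct and is essentially the paper's own proof: the paper likewise writes $R_{,11}=L(R_{,1})-(L\log \J)\,R_{,1}$ and differentiates the Gauß formula $R=2/\J-|\varrho_{ZZ}(L,L)|^2/\J^3$ twice along $L$, using the same stock of auxiliary identities (your route through $L\J=\varrho_{ZZ}(N,L)$, $L(\varrho_{ZZ}(N,L))$ and one application of Lemma~\ref{lem44} is equivalent to the paper's use of the $LL\log\J$ identity). The only point to watch is the undefined contraction $\varrho_{ZZ}\cdot L$: your chain rule produces $+3\,\varrho_{ZZZ}(L,L,L(L))$ with $L(L)=(L(L^1),L(L^2))$, so you must reconcile this with the sign convention behind the $-3\,\varrho_{ZZZ}(L,L,\varrho_{ZZ}\cdot L)$ term appearing in \eqref{e:r11} (a term that in any case vanishes in the paper's applications, where $f$ is quadratic).
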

\begin{proof} Differentiating \cref{e:scal}, we have
\begin{equation}
R_{,1}
=
- \frac{2}{\J} L\log \J  + \frac{3}{\J^3} |\varrho _{ZZ}(L,L) |^2 L \log \J  - \frac{1}{\J^3}  L\left(  |\varrho _{ZZ}(L,L)|^2\right).
\end{equation}
Differentiating one more time and using $\Gamma^1_{11} = L\log \J$, we have
\begin{align*}
R_{,11}
& = 
L\left(R_{,1} \right) - (L\log \J) R_{,1}\\
& = 
-\frac{2}{\J} \left(LL\log \J - 2(L \log \J)^2\right) \\
& \quad + \frac{3}{\J^3} |\varrho _{ZZ}(L,L)|^2 \left(LL\log \J - 4(L \log \J)^2\right) \\
& \quad + \frac{1}{\J^3} \left(7(L \log \J)( L\left( |\varrho _{ZZ}(L,L)|^2 \right) - L|\varrho _{ZZ}(L,L)|^2\right).
\end{align*}
To expand the expression further, we use
\begin{equation}
LL \log \J
=
\frac{\varrho _{ZZ}(L,L)}{\J}
+ \frac{\varrho _{ZZZ}(N,L,L)}{\J} - \frac{\varrho _{ZZ}(L,L)\varrho _{ZZ}(N,N)}{\J^2 },
\end{equation}
and 
\begin{align*}
L \Lba \left(\varrho _{ZZ}(L,L)\right)
=
-2 \varrho _{ZZ}(L,L) + 2\, \J\det \varrho _{ZZ} - 2\,\varrho _{ZZZ}(N,L,L),
\end{align*}
which can be checked directly. Thus, we have
\begin{equation}
L\left(|\varrho _{ZZ}(L,L)|^2\right) = -2 \varrho _{ZZ}(L,L) \overline{\varrho _{ZZ}(N,L)} - \varrho _{ZZZ}(L,L,L) \overline{\varrho _{ZZ}(L,L)},
\end{equation}
and hence
\begin{align}
LL\left(|\varrho _{ZZ}(L,L)|^2\right)
=
-4 \varrho _{ZZZ}(L,L,L) \overline{\varrho _{ZZ}(N,L)}- 2 \varrho _{ZZ}(L,L) (L \overline{\varrho _{ZZ}(N,L)})  \notag \\ 
+ \overline{\varrho _{ZZ}(L,L)} \left(\varrho _{ZZZZ}(L,L,L,L) - 3 \varrho _{ZZZ}(L,L,\varrho _{ZZ} \cdot L)\right).
\end{align}
Plugging these into the formula for $R_{,11}$, we complete the proof.
\end{proof}

\subsection{The Cartan tensor $Q_{11}$} Plugging \cref{e:a110,e:a1111,e:r11} into \eqref{e:chenglee}, we to obtain 
\begin{theorem}\label{cartan} Let $M$ be given by $\varrho =0$ where $\varrho $ is given by \cref{e:perbsph} and let $\theta = \iota^{\ast}(i\bar{\partial} \varrho )$. Then Cartan tensor of $M$ is given by \eqref{e:26}, where the component $Q_{11}$ takes the following form
\begin{align*}
Q_{11}
= \sum_{k=2}^{4}\mathcal{Q}_k[\varrho ]\biggl|_M,
\end{align*}
where
\begin{align}
\mathcal{Q}_{2}[\varrho ]
=
\varrho _{ZZ}(L,L) \biggl(-\frac{1}{2} \frac{\overline{\varrho _{ZZ}(L,L)}\det \varrho _{ZZ}}{\J^3} - 2\frac{\overline{\varrho _{ZZ}(N,N)}}{\J^3} \notag  \\
+ \frac{|\varrho _{ZZ}(L,L)|^2}{\J^4} - 4\frac{|\varrho _{ZZ}(N,L)|^2}{\J^4}
-\frac{5}{2}\frac{\overline{\varrho _{ZZ}(L,L)}\left(\varrho _{ZZ}(N,L)\right)^2}{\J^5}\biggr),
\end{align}
\begin{align}
\mathcal{Q}_3[\varrho ]
=
-\frac{1}{2}\frac{|\varrho _{ZZ}(L,L)|^2 \varrho _{ZZZ}(N,L,L)}{\J^4} 
+ \frac{1}{2} \frac{\overline{\varrho _{ZZ}(L,L)}\, \varrho _{ZZZ}(L,L,\varrho _{ZZ}\cdot L)}{\J^3} \notag \\
+ \frac{4}{3} \frac{\overline{\varrho _{ZZ}(N,L)}\, \varrho _{ZZZ}(L,L,L)}{\J^3}
+
\frac{7}{6} \frac{\overline{\varrho _{ZZ}(L,L)}\, \varrho _{ZZ}(N,L) \, \varrho _{ZZZ}(L,L,L)}{\J^4},
\end{align}
and 
\begin{equation}
\mathcal{Q}_4[\varrho ]
=
\frac{1}{6} \frac{\overline{\varrho _{ZZ}(L,L)}\,\varrho _{ZZZZ}(L,L,L,L)}{\J^3}.
\end{equation}
\end{theorem}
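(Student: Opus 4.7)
The plan is to substitute the four pseudohermitian quantities computed in the preceding lemmas into the Cheng--Lee formula \eqref{e:chenglee} and then organize the resulting expression by the derivative order of $\varrho$.

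First I would substitute $iA_{11}=\varrho_{ZZ}(L,L)/\J$ and $R=2/\J-|\varrho_{ZZ}(L,L)|^2/\J^3$ from Lemma~\ref{lem:rt}, together with the expressions \eqref{e:a110} for $A_{11,0}$, \eqref{e:a1111} for $iA_{11,}{}^{1}{}_{1}$, and \eqref{e:r11} for $R_{,11}$, directly into \eqref{e:chenglee}, keeping track of the weights $1/6$, $i/2$, $-1$, and $-2i/3$ that the four terms of the formula carry. A harmless rescaling by a power of $\J$ using $h_{1\ob}=\J$ passes from the component $Q_1{}^{\ob}$ produced by \eqref{e:chenglee} to the component $Q_{11}$ in the statement. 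I would then sort the resulting terms by the maximum order of derivatives of $\varrho$ that they carry.

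The unique term containing a fourth derivative $\varrho_{ZZZZ}$ originates from $R_{,11}$ and, with weight $1/6$, yields $\mathcal{Q}_4[\varrho]$ immediately. The third-derivative contributions -- those involving $\varrho_{ZZZ}(N,L,L)$, $\varrho_{ZZZ}(L,L,L)$, or $\varrho_{ZZZ}(L,L,\varrho_{ZZ}\cdot L)$, but no fourth derivative -- arise from $R_{,11}$, $A_{11,0}$, and $A_{11,}{}^{1}{}_{1}$; summing them with the appropriate signs should produce exactly the four terms displayed in $\mathcal{Q}_3[\varrho]$. The remaining second-order contributions are then to be collected into $\mathcal{Q}_2[\varrho]$.

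The main obstacle is the bookkeeping in this second-order block: terms involving $\det\varrho_{ZZ}$, $\varrho_{ZZ}(N,N)$, $\overline{\varrho_{ZZ}(N,N)}$, and $\varrho_{ZZ}(N,L)$ appear from all four sources with opposing signs, and substantial cancellations are required to reduce them to the five terms listed in $\mathcal{Q}_2[\varrho]$. Here I would invoke Lemma~\ref{lem44} to rewrite $\J^2\det\varrho_{ZZ}=\varrho_{ZZ}(L,L)\,\varrho_{ZZ}(N,N)-(\varrho_{ZZ}(N,L))^2$ wherever convenient; this is what kills the ``bare'' $\det\varrho_{ZZ}$ contributions coming from $A_{11,0}$ and $A_{11,}{}^{1}{}_{1}$ against the corresponding term in $R_{,11}$, combines the $-4(\varrho_{ZZ}(N,L))^2/\J^3$ from \eqref{e:a1111} with fragments of $RA_{11}$ into the $|\varrho_{ZZ}(N,L)|^2/\J^4$ contribution displayed, and ultimately forces the surviving expression to factor with common coefficient $\varrho_{ZZ}(L,L)$, as shown. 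The remaining computation is mechanical, and no further geometric input beyond Lemma~\ref{lem44} is required.
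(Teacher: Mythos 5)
Your proposal follows the same route as the paper: the paper's proof is exactly the substitution of \eqref{e:tor}, \eqref{e:scal}, \eqref{e:a110}, \eqref{e:a1111}, and \eqref{e:r11} into the Cheng--Lee formula \eqref{e:chenglee}, with the index adjustment via $h_{1\ob}=\J$ and simplification using Lemma~\ref{lem44}, just as you describe. The bookkeeping you outline (sorting by derivative order and cancelling the $\det\varrho_{ZZ}$ and $\varrho_{ZZ}(N,L)$ terms) is precisely the mechanical work the paper leaves implicit, so the proposal is correct.
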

When restricted to $M$, $\J = |\varrho _z|^2 + |\varrho _w|^2$.

The simplest but interesting case where this theorem applies is arguably that of real ellipsoids. In this case, $f(z,w)$ is a quadratic polynomial and hence $\mathcal{Q}_3[\varrho] = \mathcal{Q}_4[\varrho] = 0$. Thus, the component $Q_{11}$ of the Cartan tensor equals $\mathcal{Q}_2[\varrho  ]$ which is the product of two factors.
\begin{corollary}\label{cor:49}
If $f$ is a quadratic polynomial, then the CR umbilical locus of $M$ is the locus of points on $M$ satisfying either
\begin{equation}\label{e:u1}
0= \varrho _{ZZ}(L,L),
\end{equation}
or 
\begin{align}\label{e:u2}
0= -\frac{1}{2} \frac{\overline{\varrho _{ZZ}(L,L)}\det \varrho _{ZZ}}{\J^3} - 2\frac{\overline{\varrho _{ZZ}(N,N)}}{\J^3} 
+ \frac{|\varrho _{ZZ}(L,L)|^2}{\J^4}   \notag  \\ 
- 4\frac{|\varrho _{ZZ}(N,L)|^2}{\J^4}
-\frac{5}{2}\frac{\overline{\varrho _{ZZ}(L,L)}\left(\varrho _{ZZ}(N,L)\right)^2}{\J^5}.
\end{align}
\end{corollary}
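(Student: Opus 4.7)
The plan is to deduce the corollary directly from Theorem~\ref{cartan} by specializing to the case where $f$ is a polynomial of total degree at most two. The key observation is that under this hypothesis, all pure holomorphic derivatives of $f$ of order $\geqslant 3$ vanish; since the remaining pieces $|z|^2+|w|^2-1$ of $\varrho$ contribute no pure holomorphic derivatives of order $\geqslant 3$ either, the symmetric tensors $\varrho_{ZZZ}$ and $\varrho_{ZZZZ}$ are identically zero on $\CC^2$.

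Next I would inspect the explicit expressions for $\mathcal{Q}_3[\varrho]$ and $\mathcal{Q}_4[\varrho]$ in Theorem~\ref{cartan}. Every monomial of $\mathcal{Q}_3[\varrho]$ carries one factor of the form $\varrho_{ZZZ}(\cdot,\cdot,\cdot)$, while the single term of $\mathcal{Q}_4[\varrho]$ carries the factor $\varrho_{ZZZZ}(L,L,L,L)$. Hence both vanish identically under the quadratic hypothesis, and Theorem~\ref{cartan} collapses to $Q_{11}|_M = \mathcal{Q}_2[\varrho]|_M$.

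Finally, the formula for $\mathcal{Q}_2[\varrho]$ in Theorem~\ref{cartan} already presents it as a product whose first factor is $\varrho_{ZZ}(L,L)$ and whose second factor is exactly the right-hand side of \eqref{e:u2}. Since the Levi form $h_{1\ob}$ is positive, the Cartan tensor $Q$ vanishes at a point $p\in M$ if and only if $Q_{11}(p)=0$; hence the CR umbilical locus of $M$ is the zero set of this product on $M$, i.e., the union of the loci defined by \eqref{e:u1} and \eqref{e:u2}. No substantive obstacle arises: once Theorem~\ref{cartan} is in hand, the corollary is a purely algebraic consequence of the fact that higher-order pure holomorphic derivatives of a quadratic polynomial vanish, together with the visible factorization of $\mathcal{Q}_2[\varrho]$.
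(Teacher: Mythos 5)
Your proposal is correct and follows exactly the paper's (brief) argument: for quadratic $f$ the tensors $\varrho_{ZZZ}$ and $\varrho_{ZZZZ}$ vanish, so $\mathcal{Q}_3[\varrho]=\mathcal{Q}_4[\varrho]=0$ and $Q_{11}=\mathcal{Q}_2[\varrho]$, whose visible factorization into $\varrho_{ZZ}(L,L)$ times the right-hand side of \eqref{e:u2} gives the stated dichotomy. Nothing is missing; the remark that positivity of $h_{1\bar 1}$ makes vanishing of $Q$ equivalent to vanishing of $Q_{11}$ is the same (implicit) step used in the paper.
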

\section{The CR umbilical points of a real ellipsoid}
By Corollary \ref{cor:49}, the CR umbilical locus of an ellipsoid is determined by equations \eqref{e:u1} and \eqref{e:u2}. We first solve the simpler one \eqref{e:u1} and recover the curves $\gamma_{1,2}$ found in \cite{foo2018parametric}.
\begin{proposition} If $0\leqslant b \leqslant  a <1$ and $a > 0$, then the solution of the system of equations
\[
\varrho  = 0, \quad \varrho _{ZZ}(L,L)
\]
is given by
\begin{align*}
z 
& =
\sqrt{\frac{a}{a+b}} \left(\sqrt{\frac{1-b}{1+a}}\cos (t) + i \sqrt{\frac{1+b}{1-a}} \sin(t)\right),\\
w
& = \pm\sqrt{\frac{b}{a+b}} \left(\sqrt{\frac{1-a}{1+b}}\sin (t) - i \sqrt{\frac{1+a}{1-b}} \cos(t)\right).
\end{align*}
\end{proposition}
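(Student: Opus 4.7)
The plan rests on the explicit formula $\varrho_{ZZ}(L,L) = a\varrho_w^2 + b\varrho_z^2$ recalled in the introduction. Writing $z = x+iy$ and $w = u+iv$, we have $\varrho_z = (1+a)x - i(1-a)y$ and $\varrho_w = (1+b)u - i(1-b)v$. Assuming for the moment that $b > 0$, the form $a\varrho_w^2 + b\varrho_z^2$ factors over $\CC$ as
\[
\bigl(\sqrt{a}\,\varrho_w - i\sqrt{b}\,\varrho_z\bigr)\bigl(\sqrt{a}\,\varrho_w + i\sqrt{b}\,\varrho_z\bigr),
\]
so that the equation $\varrho_{ZZ}(L,L) = 0$ amounts to one of the two complex-linear conditions $\sqrt{a}\,\varrho_w = \pm i\sqrt{b}\,\varrho_z$.

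Separating real and imaginary parts within each branch yields two real-linear equations, which solve for $u$ and $v$ as explicit scalar multiples of $y$ and $x$ respectively. Each branch thus determines a two-dimensional real subspace $\Pi_\pm \subset \RR^4$ through the origin. I would then substitute these linear expressions into the real form of the ellipsoid equation
\[
(1+a)x^2 + (1-a)y^2 + (1+b)u^2 + (1-b)v^2 = 1,
\]
obtaining a positive-definite quadratic form in $(x,y)$ alone. A short simplification, in which the denominators $1\pm b$ appear naturally and a common factor of $(a+b)/a$ can be extracted, reduces the result to an ellipse in the $(x,y)$-plane; a standard trigonometric parametrization then yields precisely the curves $\gamma_1$ and $\gamma_2$ displayed in Theorem~\ref{thm:1}, the $\pm$ sign in front of $w$ corresponding to the two factors above.

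The degenerate case $b = 0$ must be handled separately but is trivial: the factorization collapses to $a\varrho_w^2 = 0$, so the condition reduces to $\varrho_w = 0$, equivalently $w = 0$, and the stated formulas specialize consistently to $z = \cos(t)/\sqrt{1+a} + i\sin(t)/\sqrt{1-a}$, $w = 0$. No step is conceptually difficult; the only real obstacle is the careful bookkeeping of signs when matching which branch of the factorization corresponds to the ``$+$'' and which to the ``$-$'' in \eqref{e:f2}, and verification of the precise scaling constants appearing in \eqref{e:f1}--\eqref{e:f2}.
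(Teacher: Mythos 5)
Your proposal is correct and is essentially the paper's own argument: the complex factorization $\sqrt{a}\,\varrho_w = \pm i\sqrt{b}\,\varrho_z$ is exactly the paper's case split $\tilde{u} = \mp\tilde{y}$, $\tilde{v} = \pm\tilde{x}$ obtained from the rescaled real and imaginary parts of $\varrho_{ZZ}(L,L)$, and both arguments then substitute the resulting linear relations into $\varrho = 0$ to get an ellipse in the $(x,y)$-plane whose trigonometric parametrization yields \eqref{e:f1}--\eqref{e:f2}, with the degenerate case $b=0$ (forcing $w=0$) treated separately in the same way. The only difference is cosmetic: you reach the two linear branches by factoring $a\varrho_w^2 + b\varrho_z^2$ over $\CC$, whereas the paper solves the equivalent pair of real quadric equations after rescaling.
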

\begin{proof} Let
\begin{equation}\label{e:ell}
\varrho (z,w,\zba,\wba)
:=
- 1 + |z|^2 + |w|^2 + \Re (az^2 + bw^2)  = 0.
\end{equation}
We have $\varrho _z = \zba + az$, $\varrho _w = \wba + b w$, $\varrho _{zz} = a, \varrho _{ww} = b$, and $\varrho _{zw} = 0$. Thus, when being restricted to $M$, 
\begin{equation}
\varrho _{ZZ}(L,L) = - a \varrho _w^2 - b \varrho _z^2
\end{equation}
is a homogeneous polynomial of degree 2 in $z,w,\zba$, and $\wba$. The affine real algebraic variety $\mathcal{X}:=\{\varrho _{ZZ}(L,L)=0\}$ is a conical surface in $\mathbb{C}^2 \cong \mathbb{R}^4$. In fact, $\mathcal{X}$ is the intersection of two conical quadrics with apexes lie at the origin. Thus $\mathcal{X} \cap \mathcal{E} \ne \emptyset$, provided that $\mathcal{X}$ is not a single point.

In real coordinates $(x,y,u,v)$, with $z = x + iy$ and $w = u+iv$, we have
\begin{align*}
\Re \varrho _{ZZ}(L,L)
& =
-a (b+1)^2 u^2 + a (b-1)^2 v^2 - b(1 + a)^2 x^2 + b(a-1)^2 y^2,\\
\Im \varrho _{ZZ}(L,L)
& =
-2 a \left(b^2-1\right) u v -2b \left(a^2-1\right)  x y.
\end{align*}
If $a = 0$ and $b\ne 0$, then $z=0$. Similarly, if $b = 0$ and $a\ne 0$, then $w=0$.
If $a\ne 0$ and $b\ne 0$, we introduce $\tilde{u} = \sqrt{a} (b+1)u, \tilde{v} = \sqrt{a}(b-1) v, \tilde{x} = \sqrt{b}(a+1)x$, and $\tilde{y} = \sqrt{b} (a-1)y$. The equations $\Re \varrho _{ZZ}(L,L) = \Im \varrho _{ZZ}(L,L) =0$ reduce to
\begin{align*}
\tilde{u}^2 - \tilde{v}^2 & = \tilde{y}^2 - \tilde{x}^2,\\
\tilde{u}\tilde{v} & = - \tilde{x} \tilde{y}.
\end{align*}
Thus, either $\tilde{u} = \tilde{y}$ and $\tilde{v} = -\tilde{x}$, or $\tilde{u} = - \tilde{y}$ and $\tilde{v} = \tilde{x}$. These two cases are similar and we will only consider the first case and leave the detail of the other case to the readers. In fact, plugging these into $\varrho $, we obtain
\begin{equation}\label{e:43}
\frac{\tilde{x}^2}{(1+a)(1-b)} + \frac{\tilde{y}^2}{(1-a)(1+b)} = \frac{ab}{a+b}.
\end{equation}
Equation \eqref{e:43} determines an ellipse in the plane $\RR^2_{ \tilde{x},\tilde{y}}$ which has a well-known parametrization and we can easily conclude the proof.
\end{proof}
Equation \eqref{e:u2} is more complicated and we can only solve it explicitly in two special cases. At the end, we will show that \eqref{e:u2} has a non-empty solution set in general.
\subsection{The case $b=0$} So $M$ is an ellipsoid of revolution given by
\begin{equation}
\varrho := -1 + |z|^2 + |w|^2 + \Re (az^2) = 0.
\end{equation}
Since $\varrho _{ZZ}(L,L) = a\varrho _w^2 = a \wba^2$, we find that $\varrho_{ZZ}(L,L) = 0$ if and only if
\begin{equation}
w = 0,\ |z|^2 + \Re (a z^2) = 1,
\end{equation}
or 
\begin{equation}
w = 0, \
z = \sqrt{\frac{1}{1+a}} \cos(t) + i \sqrt{\frac{1}{1-a}} \sin (t), 
\quad
t\in [0,2\pi).
\end{equation}
As mentioned earlier, the fact that $M$ is umbilical along this locus follows immediately from Chern--Moser normal form \cite{chern1974real}.

To determine the whole CR umbilical locus, we also compute
\begin{equation}
\varrho _{ZZ}(N,L) = a \varrho _w \varrho _{\zba},
\quad 
\varrho _{ZZ}(N,N) = a\varrho _{\zba}^2, 
\quad 
\det \varrho _{ZZ} = 0.
\end{equation}
Denote by $\mathcal{P}[\varrho]$ the right-hand side of \eqref{e:u2}. Our goal is to solve 
\[
\mathcal{P}[\varrho] = 0.
\]
By direct calculation, we can factor
\begin{equation}
\Im \mathcal{P}[\varrho]
=
a \Re(\varrho _z) \Im (\varrho _z) ((5a^2 - 4)|w|^4 - 4 (|\varrho _z|^4+2|w|^2|\varrho _z|^2)).
\end{equation}
\textit{Case 1:} $\Im (\varrho _z) = 0$.

Then $z$ is real and $\varrho _z = \varrho _{\zba} \in \mathbb{R}$. Plugging this into $\Re \mathcal{P}[\varrho]$, we have
\begin{equation}
\Re \mathcal{P}[\varrho]
=
\frac{a}{4}|w|^6 \left(4\tau^6 +(8a +8) \tau^4 + (4+6a + 5a^2) \tau^2 - 2a\right),
\end{equation}
where
\begin{equation}
\tau = \frac{\varrho _z }{|w|}.
\end{equation}
Thus, that $\Re \mathcal{P}[\varrho] = 0$ gives
\begin{equation*}
(1+a) z = \varrho _z = \tau\, |w|,
\end{equation*}
where $\tau^2 = s_0$ is the unique positive solution (which belongs to $(0,a/2)$) to the cubic
\begin{equation}
4s^3 + 8 (1+a) s^2 + (4+6a + 5a^2) s - 2a =0.
\end{equation}
Plugging this into the defining function for the ellipsoid, we have
\begin{align}
z & = \pm \sqrt{\frac{s_0}{(1+a)(1+a+s_0)}},\\
w & = \sqrt{\frac{1+a}{1+a+s_0}} \left(\cos(t) + i \sin (t)\right).
\end{align}
We obtain two closed curves of CR umbilical points.

\noindent
\textit{Case 2:} $\Re \varrho _z = 0$. Then $\varrho _z = - \varrho _{\zba}$. Plugging into the first equation yields
\begin{equation}
\Re \mathcal{P}[\varrho] = -\frac{1}{4} a \left(2a - \left(5 a^2-6 a+4\right) \tau + (8-8 a) \tau ^2 -  \tau ^3\right) < 0
\end{equation}
for 
\begin{equation}
\tau = \frac{\varrho ^2_z}{|w|^2} < 0.
\end{equation}
Thus, this case does not give any eligible solution.

\noindent
\textit{Case 3:} $(5a^2 - 4)|w|^4 - 4 (|\varrho _z|^4+2|w|^2|\varrho _z|^2) = 0$. This implies
\begin{equation}
|\varrho _z|^2 = \left(\frac{a\sqrt{5}}{2}-1\right)|w|^2.
\end{equation}
Plugging this into $\mathcal{P}[\varrho]$, we have
\begin{equation}
\Re \mathcal{P}[\varrho]
=
-\frac{5}{2}a^3 |w|^4 \left(-2 a |w|^2+\sqrt{5} |w|^2 - 2\Re (\varrho _z^2)\right),
\end{equation}
which never vanishes.

Thus, the case $b=0$, the CR umbilical locus consists of exactly three closed curves. We finish the proof of Part (i) in Theorem~\ref{thm:1}.
\subsection{The case $a=b$} In this case, $M$ is an ellipsoid given by
\begin{equation*}
\varrho := -1 + |z|^2 + |w|^2 + a \Re (z^2+w^2 ) = 0, \quad 0<a<1.
\end{equation*}
To solve the equation $\mathcal{P}[\varrho] = 0$, we observe that when $a = b$, 
\begin{equation*}
\varrho _{ZZ}(L,L) = \overline{\varrho _{ZZ}(N,N)},
\end{equation*}
while
\begin{equation*}
\varrho _{ZZ}(N,L)^2 = \overline{\varrho _{ZZ}(N,L)^2} = - |\varrho _{ZZ}(N,L)|^2.
\end{equation*}
Using these equalities, we can simplify 
\begin{align}
\Im \mathcal{P}[\varrho]
=
\frac{1}{2}\J^{-5}\Im (\varrho _{ZZ}(L,L)) \left((a^2 - 4) \J^2 - 5|\varrho _{ZZ}(N,L)|^2\right).
\end{align}
Thus, $\Im \mathcal{P}[\varrho] = 0$ if and only if
\begin{equation}
\Im (\varrho _z^2 + \varrho _w^2) = \Im (\varrho _{ZZ}(L,L))/a = 0.
\end{equation}
If we write
\begin{equation*}
\varrho _z = \alpha + i\beta, \ \varrho _w = \gamma + i\delta,
\end{equation*}
then we have
\begin{equation*}
\alpha\beta = -\gamma\delta.
\end{equation*}
We first suppose that $\gamma \ne 0$. Put $\tau = \beta/\gamma$, then we have
\begin{equation*}
\delta = - \tau\alpha.
\end{equation*}
Under these conditions, we have
\begin{align*}
\varrho _{ZZ}(L,L) = \overline{\varrho _{ZZ}(L,L)} = 
\varrho _{ZZ}(N,N) = \overline{\varrho _{ZZ}(N,N)} = a(1-\tau^2)(\gamma^2 + \alpha^2),
\end{align*}
and
\begin{equation*}
\varrho _{ZZ}(N,L)^2 = \overline{\varrho _{ZZ}(N,L)^2} = - |\varrho _{ZZ}(N,L)|^2
=
-4 a^2 \tau ^2 \left(\alpha ^2+\gamma ^2\right)^2.
\end{equation*}
Plugging these into the formula for $\mathcal{P}[\varrho]$, we easily find that
\begin{equation}
\mathcal{P}[\varrho]
=
\frac{1}{2} a \left(\alpha ^2+\gamma ^2\right)^3 P(a,\tau)
\end{equation}
where 
\begin{equation*}
P(a,\tau)
=
(a^2+2a+4) \tau ^6+(4-a (19 a+34)) \tau ^4+(a (19 a-34)-4) \tau ^2-a^2+ 2 a-4.
\end{equation*}
To solve $P(a,\tau) = 0$, we put $s=\tau^2$ so that $P(a,\tau)=0$ becomes a cubic equation for $s$, which has a unique positive solution $s= s_0$. On the other hand, plugging these into the equation for the ellipsoid ($\varrho  = 0$), we find that
\begin{equation}
\alpha^2 + \gamma^2 = \frac{1-a^2}{a \tau ^2-a+\tau ^2+1}.
\end{equation}
The last equation defines a circle in the $(\alpha,\gamma)$-plane.  Omitting the details, we present here two parametrized solution curves given by
\begin{align*}
z 
& =
\frac{1}{\sqrt{1-a +\tau^2 (1+a)}}\left( \frac{\sqrt{1-a}\, \cos (t)}{\sqrt{1+a}}+ \frac{i\,\tau\,\sqrt{1+a}\, \sin (t)}{\sqrt{1-a}}\right),\\
w 
& = 
\frac{1}{\sqrt{1-a +\tau^2 (1+a)}}\left( \frac{-\sqrt{1-a}\, \sin (t)}{\sqrt{1+a}}+ \frac{i\,\tau\,\sqrt{1+a}\, \cos (t)}{\sqrt{1-a}}\right).
\end{align*}

The case $\gamma = 0$ gives either $\alpha = 0$ or $\beta = 0$. In the first sub-case $\alpha =0$ we have no solution, while in the second case $\beta = 0$, we obtain CR umbilical points which belong to the same curves as above. We omit the details.

The curves given by \eqref{e:f1} and \eqref{e:f2} in the case $b=a$ are the same as those in \eqref{e:16} and \eqref{e:17} with $\tau = \pm 1$. Thus, the CR umbilical locus consists of exactly four curves. This completes the proof of Part (ii) in Theorem~\ref{thm:1}.
\subsection{The generic case $0< b < a < 1$} As before, $\mathcal{P}[\varrho  ]$ denotes the right-hand side of \eqref{e:u2}, so that a part of the CR umbilical locus is given by $\mathcal{P}[\varrho  ]=0$. Taking the real and imaginary parts of $\mathcal{P}[\varrho  ]$ and expressing the results in terms of $X:= i \varrho  _z^2$ and $Y := i \varrho  _w^2$, we have (after elementary but tedious calculations)
\begin{align}
\Im \mathcal{P}[\varrho  ]
=
\frac{1}{2} a(b^2-4) |X|^2 \Re(X) + \frac{1}{2} b(a^2-4) |Y|^2 \Re(Y) \notag \\
+ \frac{5}{2} a^2 b \Re(X^2 \overline{Y}) + \frac{5}{2} a b^2 \Re(Y^2 \overline{X}) \notag \\ - 4a (b^2 + 1) |XY| \Re(X) 
- 4b (a^2+1) |XY| \Re(Y) \notag \\ + \frac{1}{2}b(a^2 + 5b^2 -4) |X|^2 \Re(Y) + \frac{1}{2}a(b^2 + 5a^2 -4) |Y|^2 \Re(X) .
\end{align}
Clearly, when $\Re(X) = \Re(Y) = 0$, we have $\Im \mathcal{P}[\varrho  ] = 0$. The formula for $\Re \mathcal{P}[\varrho  ]$ is also symmetrical with respect to $a,b$ and $X,Y$. Precisely, we can write
\[
\Re \mathcal{P}[\varrho  ] = P(a,b, X,Y) + P(b,a,Y,X),
\]
for
\begin{align}\label{e:4.26}
P(a,b, X,Y) = \frac{1}{2} a(b^2 +4) |X|^2 \Im (X) + \frac{5}{2} a^2b \Im(X^2 \overline{Y}) - b^2 |X|^3 \notag \\ 
+ 4a(1- b^2) |XY| \Im (X) + (4a^2 + 3b^2) |X^2 Y| \notag \\
- 10 ab |X| \Re (X \overline{Y}) + \frac{1}{2} b(4+a^2 + 5b^2) |X|^2 \Im (Y),
\end{align}
and, of course, $P(b,a,Y,X)$ is obtained from $P(a,b,X,Y)$ by exchanging $a$ and $b$ as well as $X$ and $Y$.

The equations $\Re \mathcal{P}[\varrho  ] = \Im \mathcal{P}[\varrho  ] = 0$ are real homogeneous sextic equations of four real variables $x,y,u,v$, each of them defines a conical real variety with vertex at the origin. To show that these two intersect, we consider the points satisfying $\Re(X) = \Re(Y) = 0$, which implies $\Im \mathcal{P}[\varrho  ] = 0$. To analyze the equation $\Re \mathcal{P}[\varrho  ] = 0$, we write $X = is$ and $Y = it$, for $s,t\in \RR$, so that $|X|= s, \ \Im(X) = s, \ \Im(X^2 \overline{Y}) = s^2 t$, and so on, and plug them into \eqref{e:4.26}. We obtain a homogeneous cubic equation for $s$ and $t$. Dividing by $t^3$, we obtain a cubic equation of one real variable for $\tau := s/t$, which has a unique real solution, as can be checked directly. We leave details to the readers. Thus, the real conical variety $\{(z,w) \in \CC^2 \colon \Re \mathcal{P}[\varrho  ] = \Im \mathcal{P}[\varrho  ] = 0\}$ is not a single point and hence it must intersect the ellipsoid since its vertex is interior to the ellipsoid. This shows that $\mathcal{V} = \mathcal{E} \cap \{(z,w) \in \CC^2 \colon \Re \mathcal{P}[\varrho  ] = \Im \mathcal{P}[\varrho  ] = 0\}$ is a non-trivial conical real variety. In view of Parts (i) and (ii), for ``generic'' values of $a$ and $b$, $\mathcal{V}$ does not coincide with $\gamma_{1,2}$. Hence the proof of Theorem~\ref{thm:1} is complete. \hfill \qedsymbol

\medskip

We conclude this paper by briefly discussing a related and interesting notion of umbilicity for real hypersurfaces in $\mathbb{C}^2$ (or $\mathbb{C}P^2$). A point $p\in M$ is called a projective-umbilical point of $M$  if there is a projective image of the unit sphere with third-order (or better) contact with $M$ at $p$. This notion was defined and studied recently by Barrett--Grundmeier in \cite{barrett2020projective}. In that paper, it is proved that on compact circular real hypersurfaces in $\CC^2$ projective-umbilical points must exist. On such hypersurfaces, the existence of CR umbilical points was proved earlier by Ebenfelt and the author \cite{ebenfelt2017umbilical}. Thus, it is natural to ask if every real ellipsoid in $\mathbb{C}^2$ admits projective-umbilical points (i.e., a version of Huang--Ji's theorem for projective-umbilicality)? The answer for this question turns out to be complete and related to our computation: The curves $\gamma_{1,2}$ (solution to $\varrho_{ZZ}(L,L) = 0$) are precisely the projective-umbilical locus of $\mathcal{E}$, because the Beltrami-tensor, which characterized the projective-umbilicity, is given by
\[
\mathcal{B}_{\mathcal{E}} = -\frac{\varrho_{ZZ}(L,L)}{\J} \cdot \frac{dz \wedge dw}{d\zba \wedge d\wba}.
\]
See \cite{barrett2020projective} for the details. On the other hand, since the variety $\mathcal{V}$ is generally different from the curves $\gamma_{1,2}$ (as evident from Theorem~\ref{thm:1}, Parts (i) and (ii)), the CR umbilical points lying on $\mathcal{V}\subset \E $ are generally not projective-umbilic.

\end{document}